\documentclass[12pt]{article}
\usepackage{amsmath,enumerate,amsfonts,amssymb,color,graphicx,amsthm}

\usepackage{hyperref}
\usepackage{todonotes}
\usepackage[normalem]{ulem}

\usepackage{cite}

\setlength{\oddsidemargin}{0.25in}
\setlength{\textwidth}{6in}
\setlength{\topmargin}{-0.25in}
\setlength{\textheight}{8in}

\def\RR{{\mathbb R}}

\def\Sym{{\rm Sym}}

\newtheorem{theorem}{Theorem}[section]
\newtheorem{proposition}[theorem]{Proposition}
\newtheorem{corollary}[theorem]{Corollary}
\newtheorem{lemma}[theorem]{Lemma}
\newtheorem{definition}[theorem]{Definition}

\def\vareps{\varepsilon}

\def\tr{\textrm{tr}}

\DeclareFontFamily{OT1}{rsfs}{}
\DeclareFontShape{OT1}{rsfs}{m}{n}{ <-7> rsfs5 <7-10> rsfs7 <10-> rsfs10}{}
\DeclareMathAlphabet{\mycal}{OT1}{rsfs}{m}{n}

\def\calU{{\mycal U}}
\def\calK{{\mycal K}}

\def\eps{{\varepsilon}}

\begin{document}
\title{Towards a Liouville theorem for continuous viscosity solutions to fully nonlinear elliptic equations in conformal geometry}
\author{YanYan Li \thanks{Department of Mathematics, Rutgers University, Hill Center, Busch Campus, 110 Frelinghuysen Road, Piscataway, NJ 08854, USA. Email: yyli@math.rutgers.edu.}~\thanks{Partially supported by NSF Grant DMS-1501004.}~ and Luc Nguyen \thanks{Mathematical Institute and St Edmund Hall, University of Oxford, Andrew Wiles Building, Radcliffe Observatory Quarter, Woodstock Road, Oxford OX2 6GG, UK. Email: luc.nguyen@maths.ox.ac.uk.}~ and Bo Wang \thanks{School of Mathematics and Statistics, Beijing Institute of Technology, Beijing 100081, China. Email: wangbo89630@bit.edu.cn.}~\thanks{Partially supported by NNSF (11701027) and Beijing Institute of Technology Research Fund Program for Young Scholars.}}

\date{}

\maketitle

\centerline{\it Dedicated to Gang Tian on his 60th birthday with friendship.}

\begin{abstract}
We study entire continuous viscosity solutions to fully nonlinear elliptic equations involving the conformal Hessian. We prove the strong comparison principle and Hopf Lemma for (non-uniformly) elliptic equations when one of the competitors is $C^{1,1}$. We obtain as a consequence a Liouville theorem for entire solutions which are approximable by $C^{1,1}$ solutions on larger and larger compact domains, and, in particular, for entire $C^{1,1}_{\rm loc}$ solutions: they are either constants or standard bubbles.

\end{abstract}

\tableofcontents
\section{Introduction} 

It is of interest to prove Liouville theorems for entire continuous viscosity solutions of a fully nonlinear elliptic equation of the form
\begin{equation}
f(\lambda(A^u)) = 1,\qquad \lambda(A^u) \in \Gamma,\qquad u > 0 \text{ on } \RR^n,
	\label{Eq:21XI18-EE}
\end{equation}
where the conformal Hessian $A^u$ of $u$ is defined for $n \geq 3$ by
\[
A^u = -\frac{2}{n-2} u^{-\frac{n+2}{n-2}} \nabla^2 u  + \frac{2n}{(n-2)^2} u^{-\frac{2n}{n-2}} \nabla u \otimes \nabla u - \frac{2}{(n-2)^2} u^{-\frac{2n}{n-2}} |\nabla u|^2 I,
\]
$I$ is the $n \times n$ identity matrix, $\lambda(A^u)$ denotes the eigenvalues of $A^u$, $\Gamma$ is an open subset of $\RR^n$ and $f \in C^0(\bar \Gamma)$. (See \cite{Li09-CPAM}, or Definition  \ref{Def:PsiViscositySolution} below with $\psi = -\ln u$, for the definition of viscosity solutions as well as sub- and super-solutions.) Typically, $(f,\Gamma)$ is assumed to satisfy the following structural conditions.
\begin{enumerate}[(i)]
\item $(f,\Gamma)$ is symmetric, i.e.
\begin{equation} 
\text{if $\lambda \in \Gamma$ and $\lambda'$ is a permutation of $\lambda$, then $\lambda' \in \Gamma$ and $f(\lambda') = f(\lambda)$.}
	\label{Eq:02I19-Sym}
\end{equation}

\item $(f,\Gamma)$ is elliptic, i.e. 
\begin{equation}
\text{if $\lambda \in \Gamma$ and $\mu \in \bar\Gamma_n$, then $\lambda + \mu  \in \Gamma$ and $f(\lambda + \mu) \geq f(\lambda)$},
	\label{Eq:02I19-Ellipticity}
\end{equation}
where $\Gamma_n := \{\mu \in \RR^n: \mu_i > 0\}$ is the positive cone.

\item $(f,\Gamma)$ is locally strictly elliptic, i.e. for any compact subset $K$ of $\Gamma$, there is some constant $\delta(K) > 0$ such that
\begin{equation}
f(\lambda + \mu) - f(\lambda) \geq \delta(K) |\mu| \text{ for all } \lambda \in K, \mu \in \bar\Gamma_n.
	\label{Eq:02I19-LSEll}
\end{equation}

\item $f$ is locally Lipschitz, i.e. for any compact subset $K$ of $\Gamma$, there is some constant $C(K) > 0$ such that
\begin{equation}
|f(\lambda') - f(\lambda)| \leq C(K) |\lambda' - \lambda| \text{ for all } \lambda, \lambda' \in K.
	\label{Eq:02I19-LLip}
\end{equation}

\item The $1$-superlevel set of $f$ stays in $\Gamma$, namely
\begin{equation}
f^{-1}([1,\infty)) \subset \Gamma .
	\label{Eq:02I19-1level}
\end{equation}

\item $\Gamma$ satisfies
\begin{equation}
\Gamma \subset \Gamma_1 := \{\mu \in \RR^n: \mu_1 + \ldots \mu_n > 0\}.
	\label{Eq:02I19-Gam1}
\end{equation}
\end{enumerate}
It should be noted that equation \eqref{Eq:21XI18-EE} is not necessarily uniformly elliptic and that we do not assume that $\Gamma$ be convex nor $f$ be concave.

Standard examples of $(f,\Gamma)$ satisfying \eqref{Eq:02I19-Sym}-\eqref{Eq:02I19-Gam1} are given by $(f,\Gamma) = (\sigma_k^{1/k}, \Gamma_k)$, $1 \leq k \leq n$, where $\sigma_k$ is the $k$-th elementary symmetric function and $\Gamma_k$ is the connected component of $\{\lambda \in \RR^n: \sigma_k(\lambda) > 0\}$ containing the positive cone $\Gamma_n$.

Liouville theorems for \eqref{Eq:21XI18-EE} have been studied extensively. We mention here earlier results of Gidas, Ni and Nirenberg \cite{G-N-N-1981}, Caffarelli, Gidas and Spruck \cite{CGS} in the semi-linear case, of Viaclovsky \cite{Viac00-Duke,Viac00-TrAMS} for the $\sigma_k$-equations for $C^2$ solutions which are regular at infinity, of Chang, Gursky and Yang \cite{CGY03-IP} for the $\sigma_2$-equation in four dimensions, of Li and Li \cite{LiLi03, LiLi05} for $C^2$ solutions, and of Li and Nguyen \cite{LiNgSymmetry} for continuous viscosity solutions which are approximable by $C^2$ solutions on larger and larger compact domains. 

The key use of the $C^2$ regularity in the proof of the Liouville theorem in \cite{LiNgSymmetry} is the strong comparison principle and Hopf Lemma for \eqref{Eq:21XI18-EE}. In fact, if the strong comparison principle and Hopf Lemma can be established for $C^{1,\alpha}$  solutions ($0 \leq \alpha \leq 1$), a Liouville theorem is then proved in $C^{1,\alpha}$ regularity by the same arguments.

The present note is an exploration in the above direction. We establish the strong comparison principle and Hopf Lemma when one competitor is $C^{1,1}$, and obtain as a consequence a Liouville theorem in this regularity.

\begin{theorem}[Strong comparison principle]\label{Thm:SCPu}
Let $\Omega$ be an open, connected subset of $\RR^n$, $n \geq 3$, $\Gamma$ be a non-empty open subset of $\RR^n$ and $f\in C^0(\bar \Gamma)$ satisfying 
\eqref{Eq:02I19-Sym}-\eqref{Eq:02I19-1level}. Assume that 
\begin{enumerate}[(i)]
\item $u_1 \in USC(\Omega;[0,\infty))$ and $u_2 \in LSC(\Omega;(0,\infty])$ are a sub-solution and a super-solution to $f(\lambda(A^u)) = 1$ in $\Omega$ in the viscosity sense, respectively,
\item and that $u_1 \leq u_2$ in $\Omega$.
\end{enumerate}
If one of $\ln u_1$ and $\ln u_2$ belongs to $C^{1,1}_{\rm loc}(\Omega)$, then either $u_1 \equiv u_2$ in $\Omega$ or $u_1 < u_2$ in $\Omega$.
\end{theorem}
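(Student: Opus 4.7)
The plan is to reformulate in logarithmic variables, reduce the global strong comparison to a local openness statement, and then conclude via a Hopf-type barrier argument in which the $C^{1,1}$ competitor plays the role of an admissible test function.

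First, I would set $\psi_i := -\ln u_i$. A direct computation gives
\[
A^u = \tfrac{2}{n-2}\,e^{\frac{4}{n-2}\psi}\,\bigl[\nabla^2\psi + \tfrac{2}{n-2}\nabla\psi\otimes\nabla\psi - \tfrac{1}{n-2}|\nabla\psi|^2 I\bigr],
\]
so \eqref{Eq:21XI18-EE} becomes an equivalent equation $F(\nabla^2\psi,\nabla\psi,\psi)=0$ in which $F$ is non-decreasing in $\nabla^2\psi$ by \eqref{Eq:02I19-Ellipticity} and locally strictly so by \eqref{Eq:02I19-LSEll}. In these variables the ordering flips to $\psi_1\ge\psi_2$, with $\psi_1$ a (LSC) viscosity super-solution and $\psi_2$ a (USC) viscosity sub-solution; the $C^{1,1}_{\rm loc}$ hypothesis transfers intact. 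The difference $w:=\psi_1-\psi_2$ is LSC and non-negative, so $Z:=\{w=0\}=\{u_1=u_2\}$ is relatively closed in $\Omega$, and by connectedness it suffices to prove that $Z$ is also open.

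Suppose towards contradiction that $Z$ has a boundary point in $\Omega$. By growing a ball inside $\Omega\setminus Z$ from a nearby point until it first touches $Z$, I obtain a ball $B_r(y_0)$ compactly contained in $\Omega$ with $w>0$ in $B_r(y_0)$ and a touching point $\bar x\in\partial B_r(y_0)\cap Z$; set $\nu=(\bar x-y_0)/r$. Assume without loss of generality that $\psi_2\in C^{1,1}_{\rm loc}(\Omega)$ (the opposite case being handled by a symmetric argument). On the annulus $\mathcal{A}=B_r(y_0)\setminus\overline{B_{r/2}(y_0)}$, introduce a radial Hopf-type barrier, e.g.\ $h(x)=e^{-\alpha|x-y_0|^2}-e^{-\alpha r^2}$ with $\alpha$ large, so that $h>0$ in $\mathcal{A}$, $h=0$ on $\partial B_r(y_0)$, $\nabla h(\bar x)$ points inward along $-\nu$, and $\nabla^2 h$ has a large positive eigenvalue in the radial direction. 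For $\alpha$ large depending on $\|\psi_2\|_{C^{1,1}(\overline{\mathcal{A}})}$ and then $\epsilon>0$ small, I would verify using \eqref{Eq:02I19-LSEll} and \eqref{Eq:02I19-LLip} that $\psi_2+\epsilon h$ remains a viscosity \emph{strict} sub-solution of $F=0$ on $\mathcal{A}$, with strict slack proportional to $\epsilon$.

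With this strict sub-solution in hand, compare on $\partial\mathcal{A}$: on the outer sphere $\psi_2+\epsilon h=\psi_2\le\psi_1$, and on the inner sphere $w$ is bounded below by a positive constant (LSC plus $\overline{B_{r/2}(y_0)}\subset\{w>0\}$), so $\psi_2+\epsilon h\le\psi_1$ for $\epsilon$ small. A weak viscosity comparison principle between a $C^{1,1}$ strict sub-solution and an LSC super-solution of $F=0$ then forces $\psi_2+\epsilon h<\psi_1$ throughout the open annulus. This contradicts the equality $(\psi_2+\epsilon h)(\bar x)=\psi_2(\bar x)=\psi_1(\bar x)$ at the interior point $\bar x\in\Omega$, since the touch of a strict sub-solution onto a super-solution at an interior point is forbidden by the viscosity super-solution condition. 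This closes the argument.

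I expect the main obstacle to be the construction of the strict sub-solution $\psi_2+\epsilon h$ on $\mathcal{A}$, given that $\psi_2$ has Hessian only almost everywhere, $\Gamma$ is not assumed convex, and $f$ not assumed concave. One has to establish that a $C^{1,1}$ viscosity sub-solution of $F=0$ is a pointwise a.e.\ classical sub-solution, then perform a pointwise perturbation using \eqref{Eq:02I19-LSEll} to secure a strict positive slack (the radial eigenvalue of $\nabla^2 h$ scales like $\alpha^2$, which for $\alpha$ large must dominate the tangential negative eigenvalues and all cross gradient terms coming from the quadratic-in-$\nabla\psi$ piece of $A^u$), and finally lift the pointwise slack back to a viscosity sub-solution statement. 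A secondary technicality is the invocation of the weak comparison principle in this degenerate elliptic setting, which similarly leans on the $C^{1,1}$ regularity of one side and is presumably established as a separate lemma earlier in the paper.
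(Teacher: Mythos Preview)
Your overall strategy matches the paper's: pass to $\psi=-\ln u$, reduce by the interior--ball construction to a single boundary touching point, perturb the $C^{1,1}$ competitor by the Hopf barrier $h(x)=e^{-\alpha|x-y_0|^2}-e^{-\alpha r^2}$, and finish with a weak comparison lemma between a $C^{1,1}$ \emph{strict} sub/super-solution and a viscosity super/sub-solution. You also correctly anticipate the two technical ingredients the paper supplies separately: that a $C^{1,1}$ viscosity sub/super-solution satisfies the inequality a.e.\ (the paper's Lemma~2.5), and the weak comparison lemma with one $C^{1,1}$ side (the paper's Propositions~2.6--2.7, proved via sup/inf--convolution and an Alexandrov--Bakelman--Pucci contact-set argument).

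The gap is in your closing step. You want to conclude from weak comparison on the annulus $\mathcal{A}$ that $\psi_2+\epsilon h<\psi_1$ strictly in the \emph{open} annulus, and then argue that the touching of the $C^{1,1}$ strict sub-solution $\psi_2+\epsilon h$ onto the super-solution $\psi_1$ at $\bar x$ is ``forbidden by the viscosity super-solution condition''. Neither half is available at $C^{1,1}$ regularity. First, the weak comparison lemma the paper proves (Propositions~2.6--2.7) assumes strict inequality \emph{near} the boundary and yields strict inequality inside; with equality at $\bar x\in\partial\mathcal{A}$ you only get $\le$ on $\mathcal{A}$, and upgrading this to $<$ in the interior is itself a strong-comparison statement of the very kind you are proving. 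Second, $\psi_2+\epsilon h$ is merely $C^{1,1}$, so it is not an admissible test function for $\psi_1$ at $\bar x$; the strict sub-solution inequality $F(J_2[\psi_2+\epsilon h])\ge 1+c$ holds only a.e.\ and need not hold at the specific point $\bar x$, so no direct pointwise contradiction is produced there.

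The paper avoids this circularity by an extra device you omit: it multiplies the barrier by a cosine factor $\zeta(x)=\cos(\alpha^{1/2}(x_1-\hat x_1))$ and introduces a sliding parameter $\tau$, setting $\tilde\psi_{\mu,\tau}=\psi_2-\mu(h-\tau)\zeta$ on a ball $A$ centered at the touching point $\hat x$. The factor $\zeta$ is not cosmetic: it makes the $\tau$-term contribute a rank-one positive piece $\mu\tau\alpha\zeta\,e_1\otimes e_1$ to the Hessian, so that the strict ellipticity gain $\sim\mu(\alpha^2E+\tau\alpha)$ dominates the Lipschitz losses $\sim\mu(\alpha^{3/2}E+\tau\alpha^{1/2})$ uniformly in $\tau\in[0,\tau_0]$ for large $\alpha$; without $\zeta$, the zeroth-order $\tau$-loss cannot be absorbed because the Hessian gain carries the exponentially small factor $E$. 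One then picks $\tau_1$ with $\inf_A(\tilde\psi_{\mu,\tau_1}-\psi_1)=0$, so that $\tilde\psi_{\mu,\tau_1}\ge\psi_1$ on $A$ while $\tilde\psi_{\mu,\tau_1}>\psi_1$ strictly on $\partial A$, and Proposition~2.6 applies directly to yield a contradiction. Incorporating this $\tau$--$\zeta$ sliding into your scheme closes the argument.
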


\begin{theorem}[Hopf Lemma]\label{Thm:Hopfu}
Let $\Omega$ be an open subset of $\RR^n$, $n \geq 3$, such that $\partial\Omega$ is $C^2$ near some point $\hat x \in \partial\Omega$, $\Gamma$ be a non-empty open subset of $\RR^n$ and $f\in C^0(\bar \Gamma)$ satisfying 
\eqref{Eq:02I19-Sym}-\eqref{Eq:02I19-1level}. Assume that 
\begin{enumerate}[(i)]
\item $u_1 \in USC(\Omega \cup \{\hat x\};[0,\infty))$ and $u_2 \in LSC(\Omega \cup \{\hat x\};(0,\infty])$ are a sub-solution and a super-solution to $f(\lambda(A^u)) = 1$ in $\Omega$ in the viscosity sense, respectively,
\item and that $u_1 < u_2$ in $\Omega$, and $u_1(\hat x) = u_2(\hat x)$.
\end{enumerate}
If one of $\ln u_1$ and $\ln u_2$ belongs to $C^{1,1}(\Omega \cup \{\hat x\})$, then
\[
\liminf_{s \rightarrow 0^+} \frac{(u_2 - u_1)(\hat x - s \nu(\hat x))}{s} > 0,
\]
where $\nu(\hat x)$ is the outward unit normal to $\partial\Omega$ at $\hat x$.
\end{theorem}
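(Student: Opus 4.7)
The plan is to apply the strong comparison principle (Theorem~\ref{Thm:SCPu}) on a thin interior annulus tangent to $\partial\Omega$ at $\hat x$, comparing $u_2$ with an explicit strict sub-solution obtained by adding a small concave bump to $u_1$. I treat the case $\ln u_1 \in C^{1,1}(\Omega \cup \{\hat x\})$ first; the case $\ln u_2 \in C^{1,1}$ is handled by the symmetric construction of a strict super-solution $u_2 - \varepsilon\eta$, with inequality directions reversed throughout. Using the $C^2$-regularity of $\partial\Omega$, select an interior ball $B_r(x_0) \subset \Omega$ of radius $r > 0$ (to be fixed small below) with $\bar B_r(x_0) \cap \partial\Omega = \{\hat x\}$, so that $\nu(\hat x) = (\hat x - x_0)/r$. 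Lower semi-continuity of $u_2 - u_1$ and the strict inequality hypothesis yield $m > 0$ with $u_2 - u_1 \geq m$ on the compact set $\partial B_{r/2}(x_0) \subset \Omega$. Since $u_1(\hat x) = u_2(\hat x) > 0$ and $u_1$ is continuous, after possibly shrinking $r$ we may also assume $c_0 \leq u_1 \leq C_0$ on the annulus $A := B_r(x_0) \setminus \bar B_{r/2}(x_0)$ for constants $0 < c_0 \leq C_0$.

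\textbf{Barrier construction.} On $A$ set
\[
\psi_\varepsilon(x) := u_1(x) + \varepsilon\,\eta(x), \qquad \eta(x) := h\bigl(r - |x - x_0|\bigr), \quad h(s) := s - \tfrac{s^2}{2r}.
\]
A direct computation shows that $\nabla^2 \eta = -(1/r)\,I$ \emph{uniformly} on $A$ (the radial and tangential contributions of $\nabla^2\eta$ cancel neatly for this choice of $h$), while $\|\eta\|_{C^0(\bar A)} \leq 3r/8$ and $\|\eta\|_{C^1(\bar A)} \leq 1$. Since $\ln u_1 \in C^{1,1}$ and $u_1 \geq c_0$, also $u_1 \in C^{1,1}(\bar A)$, so $\nabla^2 u_1$ exists almost everywhere in $A$ and the viscosity sub-solution property reduces at such points to $\lambda(A^{u_1}) \in \Gamma$ and $f(\lambda(A^{u_1})) \geq 1$. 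A Taylor expansion of $u \mapsto A^u$ in $\varepsilon$ at such a point yields
\[
A^{\psi_\varepsilon} - A^{u_1} \;=\; \frac{2\varepsilon}{(n-2)\,r}\,u_1^{-\frac{n+2}{n-2}}\,I \;+\; \varepsilon\,R \;+\; O(\varepsilon^2),
\]
where $|R|$ is bounded uniformly in $A$ in terms of $c_0, C_0, \|u_1\|_{C^1(\bar A)}$ and $\|\eta\|_{C^1(\bar A)} \leq 1$ (in particular independently of $r$ for $r \leq 1$). Shrinking $r$ to make the $1/r$ factor dominate $R$ (then $\varepsilon$ to absorb the $O(\varepsilon^2)$ error), the perturbation matrix $A^{\psi_\varepsilon} - A^{u_1}$ lies in $\Gamma_n$ uniformly; invoking \eqref{Eq:02I19-Ellipticity} and \eqref{Eq:02I19-LSEll} we conclude that $\lambda(A^{\psi_\varepsilon}) \in \Gamma$ and $f(\lambda(A^{\psi_\varepsilon})) \geq 1 + \varepsilon\delta$ a.e.\ in $A$ for some $\delta > 0$. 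Thus $\psi_\varepsilon$ is a strict $C^{1,1}$ sub-solution on $A$.

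\textbf{Comparison and conclusion.} Shrink $\varepsilon$ once more so that $\varepsilon \sup_A \eta \leq m$. Then $\psi_\varepsilon \leq u_2$ on $\partial A$: on $\partial B_r(x_0)$, $\eta = 0$ so $\psi_\varepsilon = u_1 \leq u_2$; on $\partial B_{r/2}(x_0)$, $\psi_\varepsilon \leq u_1 + m \leq u_2$. A weak viscosity comparison principle for the strict sub-solution $\psi_\varepsilon$ against the super-solution $u_2$ then yields $\psi_\varepsilon \leq u_2$ throughout $A$. Evaluating on the inward-normal segment $\hat x - s\nu(\hat x) \in A$ (which corresponds to $|x - x_0| = r - s$, hence $\eta = h(s)$), we obtain
\[ (u_2 - u_1)\bigl(\hat x - s\nu(\hat x)\bigr) \;\geq\; \varepsilon\, h(s), \]
and since $h'(0) = 1$, $\liminf_{s \to 0^+}(u_2 - u_1)(\hat x - s\nu(\hat x))/s \geq \varepsilon > 0$, which is the desired inequality.

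\textbf{Main obstacle.} The crux is the strict sub-solution verification in the second paragraph: one must extract a quantitative positive sign for the $\varepsilon$-linearised perturbation of $A^u$ despite the fully nonlinear dependence of $A^u$ on $u$, $\nabla u$, $\nabla^2 u$. The positive-definite contribution comes from the $-\nabla^2 u$ term in $A^u$ applied to the concave bump $\eta$, and the trade-off against the bounded lower-order remainder $R$ is resolved by shrinking the ball radius $r$. The $C^{1,1}$ hypothesis on $\ln u_1$ (together with $u_1 \geq c_0 > 0$) is precisely what legitimates this pointwise-a.e.\ computation and provides the required uniform bound on $R$. A secondary technical matter is the weak comparison principle invoked at the final step; while not stated explicitly in the excerpt, it is standard in viscosity theory and can be established via sup-convolution approximation combined with the structural conditions \eqref{Eq:02I19-Sym}--\eqref{Eq:02I19-1level}.
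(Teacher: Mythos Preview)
Your argument is correct in outline and follows the classical Hopf barrier strategy, but it differs from the paper's route in two ways and underplays where the real work lies. First, the paper does not work directly with $u$; it passes to $\psi=-\ln u$ and proves the Hopf Lemma for the general equation $F(x,\psi,\nabla\psi,\nabla^2\psi)=1$ (Theorem~\ref{Thm:Hopfpsi}), using the exponential--cosine barrier $\tilde\psi_{\mu,\tau}=\psi_2-\mu(h-\tau)\zeta$ with $h(x)=e^{-\alpha|x|^2}-e^{-\alpha R^2}$, $\zeta(x)=\cos(\alpha^{1/2}(x_1-\hat x_1))$, where a large $\alpha$ beats the lower-order terms. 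Your radial quadratic bump with $\nabla^2\eta=-\tfrac{1}{r}I$ is simpler and exploits the explicit structure of $A^u$, letting the small ball radius $r$ do the dominating; this is fine for the specific equation but would not handle the general $(x,s,p,M)$ dependence the paper treats. (Minor ordering issue: you fix $m$ on $\partial B_{r/2}$ before shrinking $r$; reverse this.)

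Second, and more importantly, the paper does \emph{not} invoke a weak comparison from boundary data. It argues by contradiction: if the barrier $\tilde\psi_{\mu,0}$ fails to dominate, a sliding parameter $\tau$ produces an interior touching point between a $C^{1,1}$ strict super-solution and the viscosity sub-solution, contradicting Proposition~\ref{Prop:WSCPpsi} (or \ref{Prop:WSCPpsiX} in the other case). That proposition---proved via sup/inf-convolution plus an Alexandrov--Bakelman--Pucci concave-envelope argument---is the technical heart of the section. The ``weak viscosity comparison principle'' you invoke at the end is essentially the same statement, and for this non-uniformly elliptic equation (with $f$ defined only on $\bar\Gamma$ and no properness in $u$) it is not an off-the-shelf fact: its proof requires exactly the machinery of Propositions~\ref{Prop:WSCPpsi}--\ref{Prop:WSCPpsiX}. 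So your closing remark that it ``can be established via sup-convolution approximation'' is correct, but that is precisely where the paper concentrates its effort, not a routine technicality.
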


Our proof of the strong comparison principle and Hopf Lemma uses ideas in Caffarelli, Li and Nirenberg \cite{CafLiNir11} and an earlier work of the authors \cite{LiNgWang}. In fact we establish them for more general equations of the form
\[
F(x, \psi, \nabla \psi, \nabla^2 \psi) = 1.
\]
See Section \ref{Sec:SCPHL}, Theorem \ref{Thm:SCPpsi} and Theorem \ref{Thm:Hopfpsi}.

There has been a lot of studies on the (strong) comparison principle and Hopf Lemma for elliptic equations in related contexts. See for instance \cite{AmendolaGaliseVitolo13-DIE, BardiDaLio99-AM, HBDolcettaPorretaRosi15-JMPA, BirindelliDemengel04-AFSTM, BirindelliDemengel07-CPAA, BirGalIshii18-AIHP,  BirGalIshii-preprint, CabreCaffBook, CafLiNir11, UserGuide, DolcettaVitolo07-MC, DolcettaVitolo18-IMRN, HartmanNirenberg, HarveyLawsonSurvey2013, Ishii89-CPAM, IshiiLions90-JDE,Jensen88-ARMA, KawohlKutev98-AM, KawohlKutev00-FE, KawohlKutev07-CPDE, KoikeKosugi15-CPAA, KoikeLey11-JMAA, Li07-ARMA, Li09-CPAM, LiMonticelli12-JDE, LiNgWang, LiNir-misc,  LiWang18-ActaMSSB, LiWang-ATA, Trudinger88-RMI, Wang17-NA} and the references therein.

As mentioned earlier, a combination of the above strong comparison principle and Hopf Lemma and the proof of \cite[Theorem 1.1]{LiNgSymmetry} give the following Liouville theorem.

\begin{theorem}[Liouville theorem]\label{Thm:Liouville}
Assume that $n \geq 3$ and $(f,\Gamma)$ satisfies \eqref{Eq:02I19-Sym}-\eqref{Eq:02I19-Gam1}. Suppose that there exist $v_k \in C^{1,1}(B_{R_k}(0))$, $R_k \rightarrow \infty$, such that $f(\lambda(A^{v_k})) = 1$, $\lambda(A^{v_k}) \in \Gamma$ in the ball $B_{R_k}(0)$ of radius $R_k$ in the viscosity sense, $v_k$ converges uniformly on compact subsets of $\RR^n$ to some function $v > 0$. Then 
\medskip

either (i) $v$ is identically constant, $0 \in \Gamma$ and $f(0) = 1$, 
\medskip

or (ii) $v$ has the form
\begin{equation}
v(x) = \Big(\frac{a}{1 + b^2|x - x_0|^2}\Big)^{\frac{n-2}{2}}
	\label{Eq:02I19-vForm}
\end{equation}
for some $x_0 \in \RR^n$ and some $a,b > 0$ satisfying $f(2b^2a^{-2}, \ldots, 2b^2a^{-2}) = 1$.

\end{theorem}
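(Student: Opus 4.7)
My plan is to follow the moving sphere strategy of \cite{LiNgSymmetry}, with Theorems \ref{Thm:SCPu} and \ref{Thm:Hopfu} playing the role that the $C^2$ strong comparison principle and Hopf lemma played there, and with the $C^{1,1}$ sequence $v_k$ supplying the regularity that $C^2$ approximants supplied in \cite{LiNgSymmetry}. First, by the standard stability of viscosity solutions under uniform convergence on compact sets, $v$ is itself a viscosity solution of $f(\lambda(A^v)) = 1$, $\lambda(A^v) \in \bar\Gamma$, on $\RR^n$. For each $x_0 \in \RR^n$ and $\lambda > 0$, I would introduce the Kelvin transform
\[
v_{x_0,\lambda}(x) := \left(\frac{\lambda}{|x-x_0|}\right)^{\!n-2} v\!\left(x_0+\frac{\lambda^2 (x-x_0)}{|x-x_0|^2}\right),
\]
and define $v_{k;x_0,\lambda}$ analogously from $v_k$. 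By the conformal invariance of $A^u$, $v_{x_0,\lambda}$ (resp.\ $v_{k;x_0,\lambda}$) is again a viscosity solution of the same equation on its punctured domain of definition, and $v_{k;x_0,\lambda}$ inherits $C^{1,1}$ regularity there.

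For each fixed $x_0$, I would set
\[
\bar\lambda(x_0):=\sup\bigl\{\mu>0 : v_{x_0,\lambda}(x) \leq v(x) \text{ whenever } |x-x_0|\geq \lambda,\ 0<\lambda\leq \mu\bigr\}.
\]
A standard short-range argument using the positivity and continuity of $v$ gives $\bar\lambda(x_0) > 0$. If $\bar\lambda(x_0) = \infty$ for some $x_0$, then a known consequence of the full family of Kelvin inequalities forces $v$ to be constant, whereupon inserting $v \equiv c > 0$ into the equation and using condition \eqref{Eq:02I19-1level} yields alternative (i). Otherwise $\bar\lambda(x_0) < \infty$ for every $x_0$, and the core task is to upgrade $v_{x_0,\bar\lambda(x_0)} \leq v$ to $v_{x_0,\bar\lambda(x_0)} \equiv v$ on $\{|x-x_0|\geq \bar\lambda(x_0)\}$. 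Assuming the contrary, Theorem \ref{Thm:SCPu} would strengthen the weak inequality to a strict one in the open exterior, and Theorem \ref{Thm:Hopfu} applied at a boundary touching point on $\partial B_{\bar\lambda(x_0)}(x_0)$ would produce a quantitative normal-derivative gap; together these let one slightly enlarge $\bar\lambda(x_0)$ while preserving the exterior comparison, contradicting the definition of $\bar\lambda(x_0)$. Once the Kelvin identity $v_{x_0,\bar\lambda(x_0)} \equiv v$ holds for every $x_0$, the calculus lemma of Li--Li \cite{LiLi03,LiLi05} identifies $v$ with the bubble \eqref{Eq:02I19-vForm}, and inserting that form into the equation yields the relation $f(2b^2 a^{-2},\ldots,2b^2 a^{-2}) = 1$.

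The main obstacle is the rigorous application of Theorems \ref{Thm:SCPu}--\ref{Thm:Hopfu} at the critical-radius step in the general approximable case, since neither $v$ nor $v_{x_0,\bar\lambda(x_0)}$ is a priori in $C^{1,1}_{\rm loc}$, whereas those theorems require at least one competitor to have this regularity. The intended resolution, parallel to the exploitation of $C^2$ approximants in \cite{LiNgSymmetry}, is to run the upgrade step at the level of the $C^{1,1}$ functions $v_k$ and $v_{k;x_0,\lambda}$ on compact subsets of $\RR^n \setminus \{x_0\}$, where Theorems \ref{Thm:SCPu}--\ref{Thm:Hopfu} apply in their full strength, and then to pass to the limit $k\to\infty$ using uniform convergence; the quantitative form of the Hopf bound in Theorem \ref{Thm:Hopfu} is what preserves the strict normal-derivative gap under the limit. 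In the special sub-case $v \in C^{1,1}_{\rm loc}(\RR^n)$, no such approximation is needed, as both $v$ and each $v_{x_0,\lambda}$ are then $C^{1,1}$ away from $x_0$ and Theorems \ref{Thm:SCPu}--\ref{Thm:Hopfu} apply to them directly.
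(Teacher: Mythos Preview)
Your outline has the right ingredients but the critical-radius step contains a genuine gap, and the paper's proof actually proceeds quite differently at precisely that point.

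The argument ``SCP gives strict inequality in the open exterior, Hopf gives a normal gap on $\partial B_{\bar\lambda(x_0)}(x_0)$, hence $\bar\lambda(x_0)$ can be enlarged'' is only complete on a \emph{bounded} domain. On the unbounded exterior $\{|x-x_0|\ge\bar\lambda(x_0)\}$ you must also control the comparison near infinity: strict inequality in the interior plus a Hopf gap at the inner sphere do not by themselves let you push $\lambda$ past $\bar\lambda(x_0)$, because the margin $v-v_{x_0,\lambda}$ may degenerate as $|x|\to\infty$. Your proposed fix (run the argument for $v_k$ on compact subsets and pass to the limit) does not address this, and the claim that the Hopf bound survives uniform limits is not justified. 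In the paper's own words, the obstruction is that ``we have a strong comparison principle situation where touching occurs at infinity,'' and since $v$ is only $C^{0,1}$, the SCP cannot be applied to $v$ and $v_{x_0,\bar\lambda(x_0)}$ to conclude $v_{x_0,\bar\lambda(x_0)}\equiv v$.

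The paper circumvents this entirely. It runs moving spheres at the level of each $v_k$ on the \emph{bounded} annulus $B_{R_k}(0)\setminus B_{\bar\lambda_k(x)}(x)$, where Theorems~\ref{Thm:SCPu}--\ref{Thm:Hopfu} (with $v_k\in C^{1,1}$) force a touching point $y_k$ on the \emph{outer} boundary $\partial B_{R_k}(0)$. Passing to the limit yields not the identity $v_{x,\bar\lambda(x)}\equiv v$ but the scalar relation $\alpha:=\liminf_{|y|\to\infty}|y|^{n-2}v(y)=\bar\lambda(x)^{n-2}v(x)$ for every $x$ with $\bar\lambda(x)<\infty$. The dichotomy is then: if $\bar\lambda(x_0)=\infty$ for some $x_0$ then $\alpha=\infty$, forcing $\bar\lambda\equiv\infty$ and $v$ constant; otherwise one has $v_{x,\bar\lambda(x)}\le v$ together with the matching asymptotics at infinity for \emph{all} $x$, and the bubble form follows from super-harmonicity of $v$ (condition~\eqref{Eq:02I19-Gam1}) via a direct argument from \cite{Li07-ARMA,LiNgSymmetry}, \emph{without} ever invoking SCP on $v$ itself. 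You should also note that $\bar\lambda(x_0)>0$ requires the lower bound $v(y)\gtrsim(1+|y|)^{2-n}$ from super-harmonicity and local gradient estimates, not just continuity of $v$.
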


It is a fact that if $u$ is $C^{1,1}$ in some open set $\Omega$, $u$ satisfies $f(\lambda(A^{u})) = 1$ in the viscosity sense in $\Omega$ if and only if it satisfies $f(\lambda(A^{u})) = 1$ almost everywhere in $\Omega$. See e.g. Lemma \ref{Lem:C11Vis}.

It should be clear that if $0 \in \Gamma$ and $f(0) = 1$, then, by \eqref{Eq:02I19-Ellipticity} and \eqref{Eq:02I19-LSEll}, $(t, \ldots, t) \in \Gamma$ and $f(t,\ldots, t) > 1$ for all $t > 0$. Hence if some constant is a solution of \eqref{Eq:21XI18-EE}, then all entire solutions of \eqref{Eq:21XI18-EE} are constant, and likewise if some function of the form \eqref{Eq:02I19-vForm} is a solution of \eqref{Eq:21XI18-EE}, then all entire solutions of \eqref{Eq:21XI18-EE} are of the form \eqref{Eq:02I19-vForm}.

An immediate consequence is:
\begin{corollary}
Assume that  $n \geq 3$ and $(f,\Gamma)$ satisfies \eqref{Eq:02I19-Sym}-\eqref{Eq:02I19-Gam1}. If $v \in C^{1,1}_{\rm loc}(\RR^n)$ is a viscosity solution of \eqref{Eq:21XI18-EE}, then $v$ is either a constant or of the form \eqref{Eq:02I19-vForm}.
\end{corollary}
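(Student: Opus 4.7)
The plan is to reduce the Corollary directly to Theorem \ref{Thm:Liouville} by producing, essentially for free, a sequence $v_k$ meeting the hypotheses of that theorem. The key observation is that $C^{1,1}_{\rm loc}(\RR^n)$ regularity of $v$ gives a uniform $C^{1,1}$ bound of $v$ on every compact set, in particular on each closed ball $\overline{B_{R_k}(0)}$.

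Concretely, I would pick any sequence $R_k \to \infty$ and set $v_k := v\big|_{B_{R_k}(0)}$. Since $\overline{B_{R_k}(0)}$ is compact and $v \in C^{1,1}_{\rm loc}(\RR^n)$, the gradient $\nabla v$ is Lipschitz on $\overline{B_{R_k}(0)}$, so $v_k \in C^{1,1}(B_{R_k}(0))$ in the sense required by Theorem \ref{Thm:Liouville}. Because $v$ is a viscosity solution of \eqref{Eq:21XI18-EE} in all of $\RR^n$, its restriction $v_k$ is a viscosity solution of $f(\lambda(A^{v_k})) = 1$, $\lambda(A^{v_k}) \in \Gamma$ in $B_{R_k}(0)$. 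Finally, the sequence $v_k$ trivially converges uniformly on compact subsets of $\RR^n$ to $v$, since for any fixed compact $K \subset \RR^n$ and all sufficiently large $k$, $v_k = v$ on $K$.

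With these hypotheses verified, Theorem \ref{Thm:Liouville} applies and yields one of the two alternatives: either $v$ is constant, or $v$ has the bubble form \eqref{Eq:02I19-vForm}. This is precisely the content of the Corollary.

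There is really no obstacle in this argument; the only thing one must check carefully is that ``$v \in C^{1,1}_{\rm loc}(\RR^n)$'' is strong enough to give ``$v_k \in C^{1,1}(B_{R_k}(0))$'' with the boundary-included norm, which is immediate from compactness of $\overline{B_{R_k}(0)}$. (One could alternatively shrink slightly and take $v_k := v\big|_{B_{R_k - 1}(0)}$ and use $R_k - 1 \to \infty$; this sidesteps any concern about the distinction between $C^{1,1}$ on an open set versus up to its boundary.) The substantive work has already been done in Theorem \ref{Thm:Liouville}, whose proof in turn rests on the strong comparison principle (Theorem \ref{Thm:SCPu}) and Hopf Lemma (Theorem \ref{Thm:Hopfu}) established earlier.
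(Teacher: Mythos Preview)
Your argument is correct and matches the paper's treatment: the corollary is stated there as ``an immediate consequence'' of Theorem~\ref{Thm:Liouville}, and the reduction you give --- taking $v_k := v|_{B_{R_k}(0)}$ for $R_k \to \infty$ --- is exactly the intended one-line deduction.
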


The rest of the paper contains two sections. In Section \ref{Sec:SCPHL}, we state and prove our strong comparison principle and Hopf Lemma for a class of elliptic equations which is more generalized than $f(\lambda(A^u)) = 1$. In Section \ref{Sec:Liou}, we prove the Liouville theorem (Theorem \ref{Thm:Liouville}).


\section{The strong comparison principle and the Hopf Lemma}\label{Sec:SCPHL}

In this section we prove the strong comparison principle and the Hopf Lemma for elliptic equations of the form
\begin{equation}
F(x, \psi, \nabla \psi, \nabla^2 \psi) = 1 \text{ in } \Omega
	\label{Eq:24XI18-E1}
\end{equation}
where $\Omega$ is an open subset of $\RR^n$, $n \geq 1$, $F \in C(\bar\calU)$, $\calU$ is a non-empty open subset of $\bar\Omega \times \RR \times \RR^n \times \Sym_n$, and $(F,\calU)$ satisfies the following conditions.
\begin{enumerate}[(i)]

\item $(F,\calU)$ is elliptic, i.e. for all $(x,s,p,M) \in \calU, N \in \Sym_n, N \geq 0$,
\begin{equation}
(x,s,p,M + N) \in \calU \text{ and } F(x,s,p,M + N) \geq F(x,s,p,M) .
	\label{Eq:24XI18-GEllipticity}
\end{equation}
Here and below we write $N \geq 0$ for a non-negative definite matrix $N$.

\item For $x \in \bar\Omega$, let $\calU_x := \{(s,p,M) \in  \RR \times \RR^n \times \Sym_n : (x,s,p,M) \in \calU\}$. Then, 
for $x \in \bar\Omega$, the $1$-superlevel set of $F(x, \cdot)$ stays in $ \calU_x$, i.e. 
\begin{equation}
F(x,s,p,M)  < 1 \text{ for all $x \in \bar\Omega$ and $(s,p,M) \in \partial \calU_x$},
	\label{Eq:24XI18-G1level}
\end{equation}
or, equivalently,
\begin{equation*}
\{(s,p,M) \in \bar\calU_x: F(x,s,p,M)  \geq 1\} \subset \calU_x.
\end{equation*}

\item $(F,\calU)$ is locally strictly elliptic, i.e. 
for any compact subset $\calK$ of $\calU$, there is some constant $\delta = \delta(\calK) > 0$ such that, for all $(x,s,p,M) \in \calK, N \in \Sym_n, N \geq 0$,
\begin{equation}
F(x,s,p,M + N) - F(x, s, p,M) \geq \delta(\calK) |N| .
	\label{Eq:24XI18-GLSEll}
\end{equation}

\item $F$ satisfies a local Lipschitz condition with respect to $(s,p,M)$, namely for every compact subset $\calK$ of $\calU$, there there exists $C(\calK) > 0$ such that, for all $(x,s,p,M), (x,s',p',M') \in \calK$,
\begin{equation}
|F(x, s, p, M) - F(x, s', p', M')| \leq C(\calK)(|s - s'| + |p - p'| + |M - M'|).
	\label{Eq:24XI18-GLip}
\end{equation}

\end{enumerate}

To keep the notation compact, we abbreviate
\[
J_2[\psi] = (\psi, \nabla \psi, \nabla^2 \psi) \in \RR \times \RR^n \times \Sym_n.
\]

We note that equation \eqref{Eq:21XI18-EE} can be put in the form \eqref{Eq:24XI18-E1} by writing $\psi = - \ln u$, $F(J_2[\psi]) = f(\lambda(A^u))$.

To dispel confusion, we remark that $\calU$ is defined as a subset of $\bar\Omega \times \RR \times \RR^n \times \Sym_n$ rather than that of $\Omega \times \RR \times \RR^n \times \Sym_n$. In particular, the `local' properties in (iii)-(iv) are local with respect to the $(s,p,M)$-variables and not the $x$-variables.

Let us start with the definition of classical and viscosity (sub-/super-)solutions. For this we only need the ellipticity condition \eqref{Eq:24XI18-GEllipticity} and the following condition which is weaker than \eqref{Eq:24XI18-G1level}:
\begin{enumerate}[(i')]
\setcounter{enumi}{1}
\item There holds
\begin{equation}
F(x,s,p,M)  \leq 1 \text{ for all $x \in \bar\Omega$ and $(s,p,M) \in \partial \calU_x$}.
	\label{Eq:24XI18-wG1level}
\end{equation}
or, equivalently,
\begin{equation*}
\{(s,p,M) \in \bar\calU_x: F(x,s,p,M)  > 1\} \subset \calU_x.
\end{equation*}
\end{enumerate}

\begin{definition}[Classical (sub-/super-)solutions]\label{Def:PsiClassSolution}
Let $\Omega\subset\mathbb{R}^{n}$, $n \geq 1$, be an open set, and $\calU$ be a non-empty open subset of $\bar\Omega \times \RR \times \RR^n \times \Sym_n$ and $F \in C^0(\bar\calU)$ satisfying \eqref{Eq:24XI18-GEllipticity} and \eqref{Eq:24XI18-wG1level}. For a function  $\psi \in C^2(\Omega)$, we say that 
\begin{equation*}
F(x, J_2[\psi]) \leq 1 \quad \left(F(x, J_2[\psi]) \geq 1 \text{ resp.} \right) \quad\mbox{ classically in }\Omega
\end{equation*}
if there holds
\[
\text{ either } (x,J_2[\psi](x)) \notin \bar\calU \text{ or } F(x, J_2[\psi](x)) \leq 1 \text{ for all } x \in \Omega
\]
\[
\left( \; (x,J_2[\psi](x)) \in \bar\calU \text{ and } F(x, J_2[\psi](x))  \geq 1 \text{ for all } x \in \Omega\text{ resp.} \;\right).
\]

We say that a function $\psi \in C^2(\Omega)$ is a classical solution of \eqref{Eq:24XI18-E1} in $\Omega$ if we have that $(x, J_2[\psi](x)) \in \bar \calU$ and $F(x, J_2[\psi](x)) = 1$ for every $x \in\Omega$.

When $F(x, J_2[\psi]) \leq 1$ ($F(x, J_2[\psi]) \geq 1$, resp.) in $\Omega$, we also say interchangeably that $u$ is a super-solution (sub-solution) to \eqref{Eq:24XI18-E1} in $\Omega$.
\end{definition}

In the above definition, the role of condition \eqref{Eq:24XI18-wG1level} is manifested in the property that if $\psi_k$ is a sequence of super-solutions which converges in $C^2$ to some $\psi$, then $\psi$ is also a super-solution. When discussing only sub-solutions, condition \eqref{Eq:24XI18-wG1level} can be dropped.

\begin{definition}[Viscosity (sub-/super-)solutions]\label{Def:PsiViscositySolution}
Let $\Omega\subset\mathbb{R}^{n}$, $n \geq 1$, be an open set, and $\calU$ be a non-empty open subset of $\bar\Omega \times \RR \times \RR^n \times \Sym_n$ and $F \in C^0(\bar\calU)$ satisfying \eqref{Eq:24XI18-GEllipticity} and \eqref{Eq:24XI18-wG1level}. For a function  $\psi \in LSC(\Omega; \RR \cup \{\infty\})$ ($\psi \in USC(\Omega; \RR \cup \{-\infty\})$ resp.), we say that 
\begin{equation*}
F(x, J_2[\psi]) \leq 1 \quad \left(F(x, J_2[\psi]) \geq 1 \text{ resp.} \right) \quad\mbox{in }\Omega
\end{equation*}
in the viscosity sense if for any $x_{0}\in\Omega$, $\varphi\in C^{2}(\Omega)$, $(\psi-\varphi)(x_{0})=0$ and 
\begin{equation*}
\psi-\varphi\geq0\quad(\psi-\varphi\leq0  \text{ resp.})\quad\mbox{near }x_{0},
\end{equation*}
there holds
\[
\text{ either } (x_0,J_2[\varphi](x_0)) \notin \bar\calU \text{ or } F(x_0, J_2[\varphi](x_0)) \leq 1
\]
\[
\left( \; (x_0,J_2[\varphi](x_0)) \in \bar\calU \text{ and } F(x_0, J_2[\varphi](x_0))  \geq 1 \text{ resp.} \;\right).
\]

We say that a function $\psi \in C^0(\Omega)$ satisfies \eqref{Eq:24XI18-E1} in the viscosity sense in $\Omega$ if we have both that $F(x, J_2[\psi]) \geq 1$ and $F(x, J_2[\psi]) \leq 1$ in $\Omega$ in the viscosity sense.

When $F(x, J_2[\psi]) \leq 1$ ($F(x, J_2[\psi]) \geq 1$, resp.) in $\Omega$ in the viscosity sense, we also say interchangeably that $u$ is a viscosity super-solution (sub-solution) to \eqref{Eq:24XI18-E1} in $\Omega$.
\end{definition}

The main results in this section are the following.

\begin{theorem}[Strong comparison principle]\label{Thm:SCPpsi}
Let $\Omega$ be an open, connected subset of $\RR^n$, $n \geq 1$, $\calU$ be a non-empty open subset of $\bar\Omega \times \RR \times \RR^n \times \Sym_n$ and $F\in C^0(\bar \calU)$ satisfying 
\eqref{Eq:24XI18-GEllipticity}-\eqref{Eq:24XI18-GLip}. Assume that 
\begin{enumerate}[(i)]
\item $\psi_1 \in USC(\Omega;\RR \cup\{-\infty\})$ and $\psi_2 \in LSC(\Omega;\RR \cup \{\infty\})$ are a sub-solution and a super-solution to \eqref{Eq:24XI18-E1} in $\Omega$ in the viscosity sense, respectively,
\item and that $\psi_1 \leq \psi_2$ in $\Omega$.
\end{enumerate}
If one of $\psi_1$ and $\psi_2$ belongs to $C^{1,1}_{\rm loc}(\Omega)$, then either $\psi_1 \equiv \psi_2$ in $\Omega$ or $\psi_1 < \psi_2$ in $\Omega$.
\end{theorem}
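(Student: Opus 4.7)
The plan is to show that the coincidence set $Z := \{x \in \Omega : \psi_1(x) = \psi_2(x)\}$ is both relatively open and relatively closed in $\Omega$, so that connectedness of $\Omega$ forces $Z = \emptyset$ or $Z = \Omega$. Closedness is immediate: the difference $w := \psi_2 - \psi_1$ is lower semicontinuous with values in $[0,\infty]$, since $\psi_2 \in LSC$ and $-\psi_1 \in LSC$; hence $\Omega \setminus Z = \{w > 0\}$ is open, so $Z$ is closed in $\Omega$.

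For openness I argue by contradiction: assume $\emptyset \neq Z \neq \Omega$. A standard touching-ball construction yields $x' \in \Omega \setminus Z$ with $d := \dist(x',Z) < \dist(x',\partial\Omega)$, a ball $B := B_d(x') \subset \Omega \setminus Z$ with $\bar B \subset \Omega$, and a point $\hat x \in \partial B \cap Z$. On $B$, the restrictions of $\psi_1$ and $\psi_2$ remain a sub- and a super-solution of \eqref{Eq:24XI18-E1}, with $\psi_1 < \psi_2$ in $B$ and $\psi_1(\hat x) = \psi_2(\hat x)$; moreover $\partial B$ is smooth at $\hat x$, and the $C^{1,1}$ hypothesis descends to $B \cup \{\hat x\}$. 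Theorem \ref{Thm:Hopfpsi} then gives
\[
\alpha_0 := \liminf_{s \rightarrow 0^+}\frac{(\psi_2 - \psi_1)(\hat x - s\nu)}{s} > 0,
\]
where $\nu := \nu(\hat x)$ is the outward unit normal to $B$ at $\hat x$.

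Assume now $\psi_1 \in C^{1,1}_{\rm loc}(\Omega)$; the case $\psi_2 \in C^{1,1}_{\rm loc}(\Omega)$ is analogous with the roles of upper and lower envelopes reversed. Pick $K > 0$ larger than the Lipschitz constant of $\nabla \psi_1$ near $\hat x$, so that $\psi_1$ is sandwiched between the paraboloid envelopes $\varphi_\pm(y) := \psi_1(\hat x) + \nabla \psi_1(\hat x)\cdot(y - \hat x) \pm K|y - \hat x|^2$. Since $\varphi_+$ touches $\psi_1$ from above at $\hat x$, the sub-solution inequality yields
\[
F\bigl(\hat x,\, \psi_1(\hat x),\, \nabla \psi_1(\hat x),\, 2KI\bigr) \geq 1.
\]
For the super-solution side I build a $C^2$ test function $\tilde \varphi$ touching $\psi_2$ from below at $\hat x$ by adding to $\varphi_-$ a small multiple $\eta h$ of a smooth Hopf-type barrier with $h > 0$ in $B$, $h(\hat x) = 0$, and $\nabla h(\hat x)\cdot \nu < 0$. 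Combining the lower bound from Theorem \ref{Thm:Hopfpsi} inside $B$ with the envelope $\varphi_- \leq \psi_1 \leq \psi_2$ on the complementary side of $\hat x$ (where $h \leq 0$), one verifies $\tilde\varphi \leq \psi_2$ in a full neighborhood of $\hat x$; setting $\alpha := -\eta\,\nabla h(\hat x)\cdot \nu > 0$ and $M_\eta := -2KI + \eta\,\nabla^2 h(\hat x)$, the super-solution inequality delivers
\[
F\bigl(\hat x,\, \psi_1(\hat x),\, \nabla \psi_1(\hat x) - \alpha\nu,\, M_\eta\bigr) \leq 1.
\]
Reconciling the two displayed inequalities using the local Lipschitz bound \eqref{Eq:24XI18-GLip} in $p$ (to absorb the $\alpha \nu$ gradient discrepancy) and the strict ellipticity \eqref{Eq:24XI18-GLSEll} (to produce a quantitative gain from raising $M_\eta$ to $2KI$) gives, for $\eta$ chosen sufficiently small, the sought contradiction.

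The main obstacle is verifying that $\tilde\varphi \leq \psi_2$ in \emph{every} direction near $\hat x$, not merely along the inward normal, because Theorem \ref{Thm:Hopfpsi} only supplies a one-directional lim-inf. To close this gap one has to either extract from the proof of the Hopf Lemma a uniform estimate of the form $\psi_2 - \psi_1 \geq c\,\dist(\cdot,\partial B)$ on an interior cone at $\hat x$, or split the neighborhood of $\hat x$ into $B$ and its complement and exploit the $C^{1,1}$ envelope $\varphi_- \leq \psi_1 \leq \psi_2$ on the outer side, where the added correction $\eta h$ is non-positive. A secondary bookkeeping issue is tuning $h$ (especially its Hessian at $\hat x$) and $\eta$ so that the strict-ellipticity gain from $|2KI - M_\eta|$ strictly dominates the Lipschitz loss $C(\calK)\alpha$ in the final reconciliation step.
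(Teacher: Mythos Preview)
Your reduction to the touching-ball configuration is fine, but the core ``reconciliation'' step does not produce a contradiction. The two displayed inequalities you obtain,
\[
F\bigl(\hat x,\psi_1(\hat x),\nabla\psi_1(\hat x),2KI\bigr)\geq 1
\qquad\text{and}\qquad
F\bigl(\hat x,\psi_1(\hat x),\nabla\psi_1(\hat x)-\alpha\nu,M_\eta\bigr)\leq 1,
\]
are \emph{compatible}, not contradictory: since $2KI\geq M_\eta$, ellipticity only tells you that $F$ at the larger Hessian dominates $F$ at the smaller one, which is exactly the order you already have. Strict ellipticity makes this gap quantitative, but still in the harmless direction; the Lipschitz correction in $p$ cannot reverse the sign. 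This is structural: touching $\psi_1$ from above forces a \emph{large} test Hessian, touching $\psi_2$ from below forces a \emph{small} one, and monotonicity in $M$ means the resulting pair of inequalities can always coexist. The Hopf information $\alpha_0>0$ enters only as a gradient shift and carries no second-order content, so it cannot create the needed Hessian reversal. (There is also the issue you flag, that the super-solution test may return ``$(\hat x,\ldots,M_\eta)\notin\bar\calU$'' and yield nothing, but the argument fails even before that.)

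The paper avoids this obstruction by not testing both solutions at $\hat x$. Instead it perturbs the $C^{1,1}$ competitor to a \emph{strict} super-solution (resp.\ sub-solution) $\tilde\psi_{\mu,\tau}=\psi_2-\mu(h-\tau)\zeta$ on a ball around $\hat x$, using an explicit barrier whose Hessian contains a dominant rank-one term $4\mu\alpha^2 E\zeta\,x\otimes x$; this is where \eqref{Eq:24XI18-GLSEll} and \eqref{Eq:24XI18-GLip} are actually pitted against each other, at the level of the perturbed function and \emph{almost everywhere}, not at a single touching point. One then invokes a separate comparison statement (Propositions \ref{Prop:WSCPpsi}/\ref{Prop:WSCPpsiX}), proved via sup/inf-convolution and an Alexandrov--Bakelman--Pucci contact-set argument, to rule out interior touching between $\psi_1$ and the strict barrier. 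In other words, the Hopf Lemma is a \emph{consequence} of the same machinery, not an input to the strong comparison principle; trying to bootstrap the latter from the former, as you do, leads back to the consistent pair of pointwise inequalities above.
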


\begin{theorem}[Hopf Lemma]\label{Thm:Hopfpsi}
Let $\Omega$ be an open subset of $\RR^n$, $n \geq 1$, such that $\partial\Omega$ is $C^2$ near some point $\hat x \in \partial\Omega$, $\calU$ be a non-empty open subset of $\bar\Omega \times \RR \times \RR^n \times \Sym_n$ and $F\in C^0(\bar \calU)$ satisfying 
\eqref{Eq:24XI18-GEllipticity}-\eqref{Eq:24XI18-GLip}. Assume that 
\begin{enumerate}[(i)]
\item $\psi_1 \in USC(\Omega \cup\{\hat x\};\RR \cup\{-\infty\})$ and $\psi_2 \in LSC(\Omega \cup\{\hat x\};\RR \cup \{\infty\})$ are a sub-solution and a super-solution to \eqref{Eq:24XI18-E1} in $\Omega$ in the viscosity sense, respectively,
\item and that $\psi_1 < \psi_2$ in $\Omega$, and $\psi_1(\hat x) = \psi_2(\hat x)$.
\end{enumerate}
If one of $\psi_1$ and $\psi_2$ belongs to $C^{1,1}(\Omega \cup\{\hat x\})$, then
\[
\liminf_{s \rightarrow 0^+} \frac{(\psi_2 - \psi_1)(\hat x - s \nu(\hat x))}{s} > 0,
\]
where $\nu(\hat x)$ is the outward unit normal to $\partial\Omega$ at $\hat x$.
\end{theorem}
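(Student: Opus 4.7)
The plan is to implement the classical Hopf argument on an interior tangent sphere, adapted to the present non-uniformly elliptic, non-convex setting via the local strict ellipticity \eqref{Eq:24XI18-GLSEll} and Lipschitz bound \eqref{Eq:24XI18-GLip}. Without loss of generality, assume $\psi_1 \in C^{1,1}(\Omega \cup \{\hat x\})$; the case when instead $\psi_2 \in C^{1,1}$ is symmetric. By the $C^2$ regularity of $\partial\Omega$ at $\hat x$, choose an interior ball $B_R(x_0) \subset \Omega$ with $\bar B_R(x_0) \cap \partial\Omega = \{\hat x\}$, and work on the annulus $A = B_R(x_0) \setminus \bar B_{R/2}(x_0)$.

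Next, introduce the exponential barrier
$$h(x) = e^{-\alpha|x - x_0|^2} - e^{-\alpha R^2},$$
which is nonnegative on $\bar A$ and vanishes precisely on $\partial B_R(x_0)$. In the eigenbasis aligned with $x - x_0$, $\nabla^2 h(x)$ has one positive radial eigenvalue of order $\alpha^2|x-x_0|^2 e^{-\alpha|x-x_0|^2}$ and $n-1$ negative tangential eigenvalues of order $\alpha e^{-\alpha|x-x_0|^2}$. Since $\psi_1$ is a $C^{1,1}$ viscosity sub-solution, Lemma \ref{Lem:C11Vis} gives $F(x, J_2[\psi_1]) \geq 1$ a.e., and \eqref{Eq:24XI18-G1level} together with the $C^{1,1}$ bounds forces $J_2[\psi_1](x)$ to lie a.e. in a compact subset $\calK$ of $\calU$. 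Decomposing $\eps \nabla^2 h = N^+ - N^-$ into its radial and tangential parts, applying \eqref{Eq:24XI18-GLSEll} to $N^+$ and \eqref{Eq:24XI18-GLip} to $N^-$, $\eps \nabla h$, and $\eps h$, and using $|x - x_0| \geq R/2$ on $A$, I obtain, for $\alpha$ large and $\eps$ small,
$$F(x, J_2[\psi_1 + \eps h]) \geq F(x, J_2[\psi_1]) + \sigma \eps \geq 1 + \sigma \eps \quad \text{a.e. on } A,$$
for some $\sigma > 0$. By Lemma \ref{Lem:C11Vis}, $\psi_1 + \eps h$ is a \emph{strict} viscosity sub-solution on $A$.

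Now set $\beta = \min_{\partial B_{R/2}(x_0)}(\psi_2 - \psi_1) > 0$ (positive by the USC/LSC regularity, hypothesis (ii), and compactness) and shrink $\eps$ so that also $\eps \max_{\bar A} h \leq \beta$. Then $\psi_1 + \eps h \leq \psi_2$ on $\partial A$: on $\partial B_R(x_0)$ because $h = 0$ there and $\psi_1 \leq \psi_2$ (using the hypothesis at $\hat x$); on $\partial B_{R/2}(x_0)$ by the choice of $\eps$. A comparison principle then propagates the inequality to $\psi_1 + \eps h \leq \psi_2$ in $A$. Restricting to the inward normal ray $\hat x - s\nu(\hat x) = x_0 + (R - s)\nu(\hat x)$ and computing the one-sided derivative of $h$ in this direction,
$$\liminf_{s \to 0^+} \frac{(\psi_2 - \psi_1)(\hat x - s\nu(\hat x))}{s} \geq \eps \lim_{s \to 0^+}\frac{h(\hat x - s\nu(\hat x))}{s} = 2\alpha R \eps e^{-\alpha R^2} > 0.$$

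The technical heart is the comparison step, since $\psi_1 + \eps h \in C^{1,1}$ but not $C^2$ and so cannot be used directly as a test function for the viscosity super-solution $\psi_2$. The plan is the Jensen/Alexandrov approach of Caffarelli--Li--Nirenberg \cite{CafLiNir11} and the authors' earlier work \cite{LiNgWang}: suppose for contradiction that $w = \psi_2 - (\psi_1 + \eps h)$ attains a negative minimum at some $x_* \in A$ (LSC together with $w \geq 0$ on $\partial A$ forces the minimum into the open set $A$). Since $\psi_1 + \eps h$ is semi-concave, Jensen's lemma applied to a small linear perturbation $w(x) + q \cdot x$ produces, for a.e. small $q$, a nearby minimum point $x_{**}$ at which $\psi_1 + \eps h$ admits a pointwise Alexandrov second-order Taylor expansion. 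The associated jet is a second-order subjet of $\psi_2$ at $x_{**}$, so the super-solution property yields $F(x_{**}, J_2[\psi_1 + \eps h](x_{**})) \leq 1$, contradicting the strict sub-solution bound $F \geq 1 + \sigma \eps$ obtained above. The main subtlety is to make this Jensen-type argument quantitative in the present non-uniformly elliptic, non-convex framework, and to keep $x_{**}$ strictly inside $A$ away from $\hat x$ so that the strict inequality remains usable at $x_{**}$.
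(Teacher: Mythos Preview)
Your overall strategy---the classical Hopf barrier on an annulus---is a legitimate alternative to the paper's route. The paper instead perturbs the $C^{1,1}$ function by the two-parameter family $\mu(h-\tau)\zeta$ (with the extra cosine factor $\zeta$) on a \emph{ball} around $\hat x$, slides in $\tau$ until the perturbed function touches the other competitor, and then invokes the strict comparison Propositions~\ref{Prop:WSCPpsi}/\ref{Prop:WSCPpsiX} to reach a contradiction. Your exponential barrier on the annulus $R/2\le|x-x_0|\le R$ is simpler, and your computation is correct: the radial eigenvalue of $\nabla^2 h$ dominates for large $\alpha$, so \eqref{Eq:24XI18-GLSEll}--\eqref{Eq:24XI18-GLip} produce a strict $C^{1,1}$ sub-solution $\psi_1+\eps h$.

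The genuine gap is in the comparison step. You apply Jensen's lemma to $w+q\cdot x$ with $w=\psi_2-(\psi_1+\eps h)$, but Jensen's lemma requires the function being maximized, here $-w=(\psi_1+\eps h)-\psi_2$, to be \emph{semi-convex}; since $\psi_2$ is only LSC, $-\psi_2$ is merely USC and $-w$ need not be semi-convex. Without this, the set of perturbed extremal points need not have positive measure, and you cannot select $x_{**}$ in the full-measure set where $\psi_1+\eps h$ is punctually twice differentiable. This is precisely why the paper, in the proofs of Propositions~\ref{Prop:WSCPpsi}/\ref{Prop:WSCPpsiX}, first regularizes the merely semicontinuous competitor by sup-/inf-convolution (making it semi-concave/semi-convex) and only then runs the concave-envelope argument on the resulting semi-convex difference. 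Your sketch skips this regularization; citing \cite{CafLiNir11,LiNgWang} is correct in spirit, but the mechanism you describe does not work as written.

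There is a second subtlety you pass over: $F$ may depend on the zeroth-order entry $s$. At the contact point the value fed into the super-solution test is $\psi_2(x_{**})$, which differs from $(\psi_1+\eps h)(x_{**})$ by the (a priori uncontrolled) quantity $-\inf_A w$, and via \eqref{Eq:24XI18-GLip} this discrepancy competes against your strict gap $\sigma\eps$. The paper's sliding parameter $\tau$ together with the limit $\eta\to 0$ is exactly what forces this zeroth-order discrepancy to vanish; in your direct approach you would need to argue separately that $-\inf_A w\le \eps\,h(x_{**})$ and that this is absorbed by the $\alpha^2$-gain in the radial direction. Once you insert the inf-convolution step and track this constant, your annulus argument can be completed.
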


If $\psi_1$ and $\psi_2$ are continuous and one of them is $C^2$, the above theorems were proved in Caffarelli, Li, Nirenberg \cite{CafLiNir11}.

Before turning to the proof of the above theorems, we give some simple statements for viscosity solutions.

\begin{lemma}\label{Lem:C11Vis}
Let $\Omega\subset\mathbb{R}^{n}$, $n \geq 1$, be an open set, and $\calU$ be a non-empty open subset of $\bar\Omega \times \RR \times \RR^n \times \Sym_n$ and $F \in C^0(\bar\calU)$ satisfying \eqref{Eq:24XI18-GEllipticity} and \eqref{Eq:24XI18-wG1level}. Suppose that $\psi$ is semi-concave (semi-convex resp.) in $\Omega$, then
\begin{equation*}
F(x, J_2[\psi]) \leq 1 \quad \left(F(x, J_2[\psi]) \geq 1 \text{ resp.} \right) \quad\mbox{in }\Omega \text{ in the viscosity sense} 
\end{equation*}
if and only if
\begin{equation*}
\text{ either } (x, J_2[\psi](x)) \notin \bar \calU \text{ or } F(x, J_2[\psi](x)) \leq 1  \quad\mbox{a.e. in }\Omega
\end{equation*}
\begin{equation*}
\left(\;(x, J_2[\psi](x)) \in \bar \calU \text{ and } F(x, J_2[\psi](x)) \geq 1 \quad\mbox{a.e. in }\Omega   \text{ resp.}  \;\right).
\end{equation*}
\end{lemma}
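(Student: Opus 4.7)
The plan is to treat the semi-concave, super-solution case in detail; the semi-convex, sub-solution case follows by the same scheme with ``from below'' replaced by ``from above'', $-\eps$ by $+\eps$, and the $F \leq 1$ dichotomy by the $F \geq 1$ conclusion (using the classical semi-convex form of Jensen's Lemma in place of its dual). The key inputs are the Busemann--Feller--Alexandrov theorem, giving that $\psi$ is Alexandrov twice-differentiable on a full-measure subset of $\Omega$, and Jensen's Lemma, which produces positive-measure families of ``Jensen points'' at which the Alexandrov Hessian of $\psi$ is suitably controlled.

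For the forward direction (viscosity $\Rightarrow$ a.e.), fix an Alexandrov point $x_0$ of $\psi$ and set $(s_0, p_0, M_0) = J_2[\psi](x_0)$. For each $\eps > 0$, the quadratic
\[
\varphi_\eps(x) = s_0 + p_0 \cdot (x-x_0) + \tfrac{1}{2}(x-x_0)^T M_0 (x-x_0) - \eps|x-x_0|^2
\]
satisfies $\psi - \varphi_\eps = \eps|x-x_0|^2 + o(|x-x_0|^2) \geq 0$ near $x_0$ with equality at $x_0$. The viscosity super-solution property then yields, for each $\eps > 0$, either $(x_0, J_2[\varphi_\eps](x_0)) = (x_0, s_0, p_0, M_0 - 2\eps I) \notin \bar\calU$ or $F(x_0, s_0, p_0, M_0 - 2\eps I) \leq 1$. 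Letting $\eps \to 0^+$: if the second alternative holds along a subsequence, closedness of $\bar\calU$ and continuity of $F$ pass it to the limit, giving $(x_0, J_2[\psi](x_0)) \in \bar\calU$ and $F \leq 1$; otherwise the first alternative holds for all small $\eps$, so the limit jet lies in the closed set $\{(x_0,\cdot)\} \setminus \calU$, and either $(x_0, J_2[\psi](x_0)) \notin \bar\calU$ or $J_2[\psi](x_0) \in \partial\calU_{x_0}$, in which case~\eqref{Eq:24XI18-wG1level} supplies $F \leq 1$.

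For the reverse direction (a.e. $\Rightarrow$ viscosity), let $\varphi \in C^2$ touch $\psi$ from below at $x_0$. If $J_2[\varphi](x_0) \notin \bar\calU$, the super-solution conclusion holds trivially; if $J_2[\varphi](x_0) \in \partial\calU_{x_0}$, \eqref{Eq:24XI18-wG1level} gives $F \leq 1$. Assume then $J_2[\varphi](x_0) \in \calU_{x_0}$ (the interior). Pass to $\tilde\varphi(x) = \varphi(x) - \eta|x-x_0|^2$ with $\eta > 0$ small, so that $\psi - \tilde\varphi$ is semi-concave with strict local minimum $0$ at $x_0$ and $J_2[\tilde\varphi](x_0)$ still lies in $\calU_{x_0}$. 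Jensen's Lemma (dual form: semi-concave with strict local min) provides, for each small $r > 0$, a positive-measure set of $x$ near $x_0$ such that $(\psi - \tilde\varphi)(y) - p \cdot y$ attains a local min at $y = x$ for some $p \in B_r(0)$. Intersecting with the Alexandrov set of $\psi$ and with the a.e.-super-solution set produces such an $x$ at which $\nabla\psi(x) = \nabla\tilde\varphi(x) + p$ and $\nabla^2\psi(x) \geq \nabla^2\tilde\varphi(x)$. For $r$ small, the jet $(x, \psi(x), \nabla\tilde\varphi(x)+p, \nabla^2\tilde\varphi(x))$ sits inside $\calU$, so ellipticity~\eqref{Eq:24XI18-GEllipticity} applied to the nonnegative shift $\nabla^2\psi(x) - \nabla^2\tilde\varphi(x)$ places $(x, J_2[\psi](x)) \in \calU$ and gives
\[
F\bigl(x, \psi(x), \nabla\psi(x), \nabla^2\tilde\varphi(x)\bigr) \leq F\bigl(x, J_2[\psi](x)\bigr) \leq 1,
\]
the second inequality being the a.e. super-solution hypothesis (the first alternative is ruled out since $J_2[\psi](x) \in \calU$). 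Sending $x \to x_0$ along Jensen points (whence $p \to 0$) and then $\eta \to 0^+$, continuity of $F$ yields $F(x_0, J_2[\varphi](x_0)) \leq 1$, as required.

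The principal obstacle is the correct invocation of Jensen's Lemma in the dual form---semi-concave with strict local minimum, delivering a positive-measure family of Alexandrov Jensen points at which $\nabla^2 \psi \geq \nabla^2 \tilde\varphi$---which, while standard in the viscosity-solutions literature, must be set up carefully. A secondary subtlety, pervading both directions, is the case analysis on the position of the relevant jet relative to $\calU$, $\partial\calU$, and $(\bar\calU)^c$; it is precisely on $\partial\calU$ that hypothesis~\eqref{Eq:24XI18-wG1level} does its essential work, closing both the forward limit argument and the first case split in the reverse direction.
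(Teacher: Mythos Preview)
Your proof is correct and follows essentially the same approach as the paper's. The only cosmetic difference is in the reverse direction: where you invoke Jensen's Lemma as a black box to produce the positive-measure set of contact points with $\nabla^2\psi \geq \nabla^2\tilde\varphi$, the paper carries out the equivalent concave-envelope argument explicitly (citing \cite[Lemma~3.5]{CabreCaffBook}), shifting by a small constant $\eta$ and working on the contact set $\{\xi = \Gamma_\xi\}$; the remaining steps---ellipticity, the a.e.\ hypothesis, and passage to the limit---are identical in substance.
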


Recall that $\psi$ is semi-concave (semi-convex resp.) in $\Omega$ if there is some $K > 0$ such that $\psi - \frac{K}{2}|x|^2$ ($\psi + \frac{K}{2}|x|^2$ resp.) is locally concave (convex resp.) in $\Omega$. By a theorem of Alexandrov, Buselman and Feller (see e.g. \cite[Theorem 1.5]{CabreCaffBook}), semi-concave (or semi-convex) functions are almost everywhere punctually second order differentiable.

\begin{proof}
(a) Consider the inequality $F(x, J_2[\psi]) \leq 1$. 

Since $\psi$ is semi-concave, it is almost everywhere punctually second order differentiable. Suppose that $F(x, J_2[\psi]) \leq 1$ in $\Omega$ in the viscosity sense and $x_0$ is a point where $\psi$ is punctually second order differentiable. Then we can use
\[
\varphi(x) = \psi(x_0) + \nabla \psi(x_0) \cdot (x - x_0) + (x - x_0)^T \nabla^2 \psi(x_0) (x - x_0) - \delta|x - x_0|^2
\]
for any $\delta > 0$ as test functions at $x_0$ to see that 
\[
\text{ either } (x_0, J_2[\psi](x_0) - (0,0,2\delta I )) \notin \bar \calU \text{ or } F(x_0, J_2[\psi](x_0) - (0,0,2\delta I )) \leq 1.
\]
Sending $\delta \rightarrow 0$ and using \eqref{Eq:24XI18-wG1level}, we obtain
\[
\text{ either } (x_0, J_2[\psi](x_0)) \notin \bar \calU \text{ or } F(x_0, J_2[\psi](x_0) ) \leq 1.
\]

Conversely, assume that either $(x, J_2[\psi](x)) \notin \bar \calU$ or $F(x, J_2[\psi](x)) \leq 1$ for almost all $x \in \Omega$, and suppose, for some $x_{0}\in\Omega$ and $\varphi\in C^{2}(\Omega)$, that $(\psi -\varphi)(x_{0})=0$ and $\psi-\varphi\geq0 $ near $x_{0}$. We need to show that
\[
\text{ either } (x_0, J_2[\varphi](x_0)) \notin\bar\calU \text{ or } F(x_0, J_2[\varphi](x_0))  \leq 1.
\]
If $(x_0, J_2[\varphi](x_0)) \notin\calU$, we are done by \eqref{Eq:24XI18-wG1level}. We assume henceforth that $(x_0, J_2[\varphi](x_0)) \in\calU$.

Replacing $\varphi$ by $\varphi - \delta|x - x_0|^2$ for some small $\delta > 0$ and letting $\delta \rightarrow 0$ eventually, we may assume without loss of generality that
\[
\psi > \varphi \text{ in } B_{2r_0}(x_0)\setminus \{x_0\} \subset \Omega \text{ for some } r_0 > 0.
\]

For small $\eta > 0$, let $\xi = \xi_\eta = (\psi - \varphi - \eta)^-$ and let $\Gamma_\xi$ be the concave envelop of $\xi$ in $B_{2r_0}(x_0)$. We have by \cite[Lemma 3.5]{CabreCaffBook} that
\[
\int_{\{\xi = \Gamma_{\xi}\}} \det(-\nabla^2 \Gamma_{\xi}) \geq \frac{1}{C} (\sup_{B_{2r_0}(x_0)} \xi)^n > 0.
\]
In particular, the set $\{\xi = \Gamma_{\xi}\}$ has non-zero measure. Thus, we can find $y_\eta \in \{\xi = \Gamma_{\xi}\}$ such that $\psi$ is punctually second order differentiable at $y_\eta$, either $(y_\eta, J_2[\psi](y_\eta)) \notin \bar\calU$ or $F(y_\eta, J_2[\psi](y_\eta)) \leq 1$ and
\begin{align}
0 > \xi(y_\eta) &= \psi(y_\eta) - \varphi(y_\eta) - \eta  \geq - \eta
	,\label{Eq:04I19-C1a}\\
|\nabla\xi(y_\eta)| &= |\nabla \psi(y_\eta) - \nabla\varphi(y_\eta)| \leq C\eta
	,\label{Eq:04I19-C1b}\\
\nabla^2\xi(y_\eta) &= \nabla^2 \psi(y_\eta) - \nabla^2 \varphi(y_\eta) \geq 0
	.\label{Eq:04I19-C1c}
\end{align}
Recalling that $(x_0, J_2[\varphi](x_0)) \in\calU$ and noting that $y_\eta \rightarrow x_0$ as $\eta \rightarrow 0$, we deduce from \eqref{Eq:24XI18-GEllipticity} and \eqref{Eq:04I19-C1a}-\eqref{Eq:04I19-C1c} that, for all small $\eta$, $(y_\eta, J_2[\varphi](y_\eta))$, $(y_\eta, J_2[\psi](y_\eta))$ and $(y_\eta, \psi(y_\eta),  \nabla \psi(y_\eta), \nabla^2 \varphi(y_\eta))$ belong to $\calU$. We then have
\begin{eqnarray*}
1 
	&\geq& F(y_\eta, J_2[\psi](y_\eta))\\
	&\stackrel{\eqref{Eq:24XI18-GEllipticity},\eqref{Eq:04I19-C1c}}{\geq}& F(y_\eta, \psi(y_\eta),  \nabla \psi(y_\eta), \nabla^2 \varphi(y_\eta))\\
	&\stackrel{\eqref{Eq:04I19-C1a}, \eqref{Eq:04I19-C1b}}{\geq}& F(y_\eta, \varphi(y_\eta),  \nabla \varphi(y_\eta), \nabla^2 \varphi(y_\eta)) + o_\eta(1),
\end{eqnarray*}
where $o_\eta(1) \rightarrow 0$ as $\eta \rightarrow 0$ and where we have used the uniform continuity of $F$ on compact subsets of $\bar\calU$. Letting $\eta \rightarrow 0$, we obtain the assertion.

(b) Consider now the inequality $F(x, J_2[\psi]) \geq 1$. This case is treated similarly, but is slightly easier as we do not have a dichotomy in the almost everywhere sense.

Since $\psi$ is semi-convex, it is almost everywhere punctually second order differentiable. If $F(x, J_2[\psi]) \geq 1$ is satisfied in the viscosity sense, then, as in the previous case, if $x_0$ is a point where $\psi$ is punctually second order differentiable, then
\[
(x_0, J_2[\psi](x_0) + (0,0,2\delta)) \in \bar\calU \text{ and } F(x_0, J_2[\psi](x_0) + (0,0,2\delta)) \geq 1 \text{ for any } \delta > 0,
\]
and so, upon sending $\delta \rightarrow 0$, we obtain
\[
(x_0, J_2[\psi](x_0) ) \in \bar\calU \text{ and } F(x_0, J_2[\psi](x_0) ) \geq 1.
\]

Suppose that $F(x, J_2[\psi](x)) \geq 1$ holds almost everywhere in $\Omega$ and suppose, for some $x_{0}\in\Omega$ and $\varphi\in C^{2}(\Omega)$, that $(\psi -\varphi)(x_{0})=0$ and $\psi-\varphi\leq0 $ near $x_{0}$. We need to show that
\[
F(x_0, J_2[\varphi](x_0))  \geq 1.
\]

Replacing $\varphi$ by $\varphi + \delta|x - x_0|^2$ for some small $\delta > 0$ and letting $\delta \rightarrow 0$ eventually, we may assume without loss of generality that
\[
\psi < \varphi \text{ in } B_{2r_0}(x_0)\setminus \{x_0\} \subset \Omega \text{ for some } r_0 > 0.
\]

For small $\eta > 0$, let $\xi = \xi_\eta = (\psi - \varphi + \eta)^+$ and let $\Gamma_\xi$ be the concave envelop of $\xi$ in $B_{2r_0}(x_0)$. We have by \cite[Lemma 3.5]{CabreCaffBook} that
\[
\int_{\{\xi = \Gamma_{\xi}\}} \det(-\nabla^2 \Gamma_{\xi}) \geq \frac{1}{C} (\sup_{B_{2r_0}(x_0)} \xi)^n > 0.
\]
In particular, the set $\{\xi = \Gamma_{\xi}\}$ has positive measure. Thus, we can find $y_\eta \in \{\xi = \Gamma_{\xi}\}$ such that $\psi$ is punctually second order differentiable at $y_\eta$, $F(y_\eta, J_2[\psi](y_\eta)) \geq 1$ and
\begin{align}
0 < \xi(y_\eta) &= \psi(y_\eta) - \varphi(y_\eta) + \eta  \leq \eta
	,\label{Eq:03I19-C1a}\\
|\nabla\xi(y_\eta)| &= |\nabla \psi(y_\eta) - \nabla\varphi(y_\eta)| \leq C\eta
	,\label{Eq:03I19-C1b}\\
\nabla^2\xi(y_\eta) &= \nabla^2 \psi(y_\eta) - \nabla^2 \varphi(y_\eta) \leq 0
	.\label{Eq:03I19-C1c}
\end{align}
It follows that
\begin{eqnarray*}
1 
	&\leq& F(y_\eta, J_2[\psi](y_\eta))\\
	&\stackrel{\eqref{Eq:24XI18-GEllipticity},\eqref{Eq:03I19-C1c}}{\leq}& F(y_\eta, \psi(y_\eta),  \nabla \psi(y_\eta), \nabla^2 \varphi(y_\eta))\\
	&\stackrel{\eqref{Eq:03I19-C1a}, \eqref{Eq:03I19-C1b}}{\leq}& F(y_\eta, \varphi(y_\eta),  \nabla \varphi(y_\eta), \nabla^2 \varphi(y_\eta)) + o_\eta(1),
\end{eqnarray*}
where $o_\eta(1) \rightarrow 0$ as $\eta \rightarrow 0$ and where we have used the uniform continuity of $F$ on compact subsets of $\bar\calU$. Letting $\eta \rightarrow 0$ and noting that $y_\eta \rightarrow x_0$, we conclude the proof.
\end{proof}

\subsection{Proof of the strong comparison principle}

We first prove the strong comparison principle for subsolutions and $C^{1,1}$ strict super-solutions.

\begin{proposition}\label{Prop:WSCPpsi}
Let $\Omega$ be an open, connected subset of $\RR^n$, $n \geq 1$, $\calU$ be a non-empty open subset of $\bar\Omega \times \RR \times \RR^n \times \Sym_n$ and $F\in C^0(\bar \calU)$ satisfying \eqref{Eq:24XI18-GEllipticity}-\eqref{Eq:24XI18-G1level}. Assume that 
\begin{enumerate}[(i)]
\item $\psi_1 \in USC(\Omega;\RR \cup\{-\infty\})$ satisfies%
\[
F(x,J_2[\psi_1]) \geq 1 \text{ in $\Omega$ in the viscosity sense},
\]
\item $\psi_2 \in C^{1,1}_{\rm loc}(\Omega)$ satisfies for some constant $a < 1$,
\[
\text{ either } (x,J_2[\psi_2](x)) \notin\bar\calU  \text{ or } F(x,J_2[\psi_2](x)) \leq a \qquad \text{ a.e. in $\Omega$},
\]
\item $\psi_1 \leq \psi_2$ in $\Omega$ and  $\psi_1 < \psi_2$ near $\partial\Omega$.
\end{enumerate}
Then $\psi_1 < \psi_2$ in $\Omega$.
\end{proposition}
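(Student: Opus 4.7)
The plan is to argue by contradiction via the ABP-type machinery used in the proof of Lemma \ref{Lem:C11Vis}. Suppose the coincidence set $Z := \{\psi_1 = \psi_2\}$ is nonempty; since $\psi_1 < \psi_2$ near $\partial\Omega$ and $\psi_1 - \psi_2$ is upper semi-continuous, $Z$ is a nonempty compact subset of $\Omega$. I aim to locate, arbitrarily close to $Z$, a point at which the viscosity subsolution inequality for $\psi_1$ can be tested against a $C^2$ function whose jet agrees with $J_2[\psi_2]$ up to an arbitrarily small perturbation -- this will contradict $F(\cdot, J_2[\psi_2]) \leq a < 1$.

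For small $\eta > 0$ I would set $\xi_\eta := (\eta - (\psi_2 - \psi_1))^+$, an upper semi-continuous, nonnegative, compactly-supported-in-$\Omega$ function attaining its maximum $\eta$ on $Z$. Let $\Gamma_\eta$ denote its concave envelope on a fixed ball $B$ containing $\mathrm{supp}(\xi_\eta)$ in its interior. By \cite[Lemma~3.5]{CabreCaffBook} the contact set $\{\xi_\eta = \Gamma_\eta\}$ has positive Lebesgue measure, and standard estimates give $|\nabla \Gamma_\eta| \leq C\eta$ on compact subsets of $B$. I then pick $y_\eta$ in the contact set at a point where additionally $\xi_\eta(y_\eta) > 0$, $\psi_2$ is punctually twice differentiable in the Alexandrov--Buselman--Feller sense (a full-measure property since $\psi_2 \in C^{1,1}_{\rm loc}$), $\Gamma_\eta$ is punctually twice differentiable with $\nabla^2 \Gamma_\eta(y_\eta) \leq 0$, and the a.e. strict supersolution condition (ii) holds at $y_\eta$; then $0 \leq \psi_2(y_\eta) - \psi_1(y_\eta) < \eta$.

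Next, combining $\xi_\eta \geq \eta - (\psi_2 - \psi_1)$ with $\xi_\eta \leq \Gamma_\eta$ gives $\psi_1 \leq \psi_2 + \Gamma_\eta - \eta$ throughout $B$, with equality at $y_\eta$. Summing the punctual $C^2$ Taylor expansions of $\psi_2$ and $\Gamma_\eta$ at $y_\eta$ and absorbing the $o(|x-y_\eta|^2)$ remainder with a term $\delta |x - y_\eta|^2$ (for arbitrary $\delta > 0$) will produce a $C^2$ majorant $\Phi_\delta$ of $\psi_1$ touching at $y_\eta$ with
\[
J_2[\Phi_\delta](y_\eta) = \bigl(\psi_1(y_\eta),\, \nabla\psi_2(y_\eta) + \nabla\Gamma_\eta(y_\eta),\, \nabla^2 \psi_2(y_\eta) + \nabla^2 \Gamma_\eta(y_\eta) + 2\delta I\bigr).
\]
The viscosity subsolution condition for $\psi_1$ at $y_\eta$ tested against $\Phi_\delta$, together with \eqref{Eq:24XI18-G1level}, then gives $(y_\eta, J_2[\Phi_\delta](y_\eta)) \in \calU$ and $F(y_\eta, J_2[\Phi_\delta](y_\eta)) \geq 1$. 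Since $-\nabla^2 \Gamma_\eta(y_\eta) \geq 0$, ellipticity \eqref{Eq:24XI18-GEllipticity} applied to this nonnegative Hessian increment yields
\[
J^{\sharp}_\eta := \bigl(y_\eta, \psi_1(y_\eta), \nabla\psi_2(y_\eta) + \nabla\Gamma_\eta(y_\eta), \nabla^2 \psi_2(y_\eta) + 2\delta I\bigr) \in \calU, \qquad F(y_\eta, J^{\sharp}_\eta) \geq 1.
\]

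Finally, $J^{\sharp}_\eta$ differs from $(y_\eta, J_2[\psi_2](y_\eta))$ by only $O(\eta)$ in the $(s,p)$-slots and by $2\delta I$ in the Hessian. In the main case $F(y_\eta, J_2[\psi_2](y_\eta)) \leq a$, the local Lipschitz bound \eqref{Eq:24XI18-GLip} (extended to $\bar\calU$ by uniform continuity) will yield
\[
1 \leq F(y_\eta, J^{\sharp}_\eta) \leq F(y_\eta, J_2[\psi_2](y_\eta)) + C(\eta + \delta) \leq a + C(\eta + \delta),
\]
which is impossible once $\eta + \delta < (1-a)/C$. The alternative $J_2[\psi_2](y_\eta) \notin \bar\calU$ is excluded by the same $O(\eta + \delta)$ closeness together with \eqref{Eq:24XI18-G1level}: the segment from $J^{\sharp}_\eta \in \calU$ to $(y_\eta, J_2[\psi_2](y_\eta))$ must meet $\partial \calU$ at a point lying in a fixed compact set on which $F < 1$ uniformly, and Lipschitz across the $O(\eta + \delta)$ distance will force $F(J^{\sharp}_\eta) < 1$, again contradicting $F(J^{\sharp}_\eta) \geq 1$. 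The principal obstacle, and the central trick (going back to Caffarelli--Li--Nirenberg \cite{CafLiNir11}), is precisely this absorption step: ellipticity is used to cancel the potentially large Hessian discrepancy $\nabla^2 \Gamma_\eta(y_\eta)$, leaving only the small $(s,p,M)$-perturbation of size $O(\eta+\delta)$ for the Lipschitz estimate to handle.
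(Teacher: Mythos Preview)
There is a genuine gap at the step where you invoke \cite[Lemma~3.5]{CabreCaffBook} to conclude that the contact set $\{\xi_\eta = \Gamma_\eta\}$ has positive Lebesgue measure. That lemma only yields
\[
\int_{\{\xi_\eta = \Gamma_\eta\}} \det(-\nabla^2 \Gamma_\eta)\,dx \;\geq\; c\,(\sup \xi_\eta)^n > 0,
\]
which implies positive Lebesgue measure of the contact set \emph{only} if $\det(-\nabla^2 \Gamma_\eta)$ is bounded there. When $\xi$ is semi-convex one has, at a.e.\ contact point, $-CI \leq \nabla^2 \xi \leq \nabla^2 \Gamma_\eta \leq 0$, whence $\det(-\nabla^2 \Gamma_\eta) \leq C^n$ and the conclusion follows. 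In your setting $\xi_\eta = (\psi_1 - \psi_2 + \eta)^+$ with $\psi_1$ merely USC, so $\xi_\eta$ is not semi-convex and the Monge--Amp\`ere measure of $\Gamma_\eta$ may well be singular with respect to Lebesgue measure on the contact set. (A one-dimensional model: $\psi_2 \equiv 0$, $\psi_1(x) = -|x|$; then $\xi_\eta = (\eta - |x|)^+$ on a large interval has concave envelope a tent with vertex at $0$, and the contact set is the single point $\{0\}$.) Without positive measure you cannot choose $y_\eta$ at which $\psi_2$ is punctually twice differentiable and the a.e.\ condition (ii) holds, so the construction of the $C^2$ test function $\Phi_\delta$ breaks down.

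This is exactly the obstruction that the paper's proof removes by first replacing $\psi_1$ with its sup-convolution $\hat\psi_\eps$: the sup-convolution is semi-convex (so $\xi = \hat\psi_\eps - \psi_2 + \tau$ is semi-convex and the contact set has positive measure), and at points of punctual second-order differentiability it inherits a shifted version of the viscosity subsolution inequality, namely $F\bigl(x^*, \hat\psi_\eps(x) + \tfrac{1}{\eps}|x^*-x|^2, \nabla\hat\psi_\eps(x), \nabla^2\hat\psi_\eps(x)\bigr) \geq 1$. One then has to control the shift $x \mapsto x^*$ and the extra $\tfrac{1}{\eps}|x^*-x|^2$ term, which the paper does via the auxiliary estimate $\liminf_{\eps \to 0} \tfrac{1}{\eps}|y^*-y|^2 \leq \eta$. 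Your remaining steps (ellipticity to drop $\nabla^2\Gamma_\eta$, then a compactness/continuity argument to reach the contradiction with $F \leq a$) are essentially those of the paper, but they can only be run after the sup-convolution has been introduced.
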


\begin{proof} We follow \cite{LiNgWang}. Assume by contradiction that there exists some $\hat x \in \Omega$ such that $\psi_1(\hat x) = \psi_2(\hat x)$.

\medskip
\noindent\underline{Step 1:} We regularize $\psi_1$ using sup-convolution. 
\medskip

This step is well known, see e.g. \cite[Chapter 5]{CabreCaffBook}.

Take some bounded domain $A$ containing $\hat x$ such that $\bar A \subset \Omega$ and $\psi_1 < \psi_2$ on $\partial A$.

We define, for small $\eps > 0$ and $x \in A$,
\[
\hat \psi_\eps(x) = \sup_{y \in \Omega} \Big(\psi_1(y) -\frac{1}{\eps}|x - y|^2\Big).
\]

It is well-known that $\hat \psi_\eps \geq \psi_1$, $\hat \psi_\eps$ is semi-convex, $\nabla^2 \hat\psi_\eps \geq -\frac{2}{\vareps}I$ a.e. in $A$, and $\hat\psi_\eps$ converges monotonically to $\psi_1$ as $\eps \rightarrow 0$. Furthermore, for every $x \in A$, there exists $x^* = x^*(\eps, x)$ such that 
\begin{equation}
\hat\psi_\eps(x) =  \psi_1(x^*) -\frac{1}{\eps}|x - x^*|^2.
	\label{Eq:22XI18-A3}
\end{equation}

We note that if $x$ is a point where $\hat\psi_\eps$ is punctually second order differentiable, then $\psi_1$ `can be touched from above' at $x^*$ by a quadratic polynomial:
\begin{equation}
\psi_1(x^{*}+z)
	\leq \hat\psi_\eps(x) + \frac{1}{\eps}|x^{*}-x|^{2} + \nabla \hat\psi_{\eps}(x)\cdot z
	+\frac{1}{2}z^{T}\nabla^{2}\hat\psi_{\eps}(x)z 
	  +o(|z|^{2})\quad\mbox{ as }z\rightarrow 0,
	\label{Eq:22XI18-A4}
\end{equation}
which is a consequence of the inequalities
\begin{align*}
\hat\psi_{\eps}(x+z) 
	&\leq \hat\psi_{\eps}(x)
	+ \nabla \hat\psi_{\eps}(x)\cdot z
	+\frac{1}{2}z^{T}\nabla^{2}\hat\psi_{\eps}(x)z 
	+ o(|z|^{2}),\quad\mbox{ as }z\rightarrow 0,\\
\hat\psi_{\eps}(x+z) 
	&\geq \psi_1(x^{*}+z) - \frac{1}{\eps}|x^{*}-x|^{2}
	.
\end{align*}
(Here we have used the definition of $\hat\psi_{\eps}$ in the last inequality.)

An immediate consequence of \eqref{Eq:22XI18-A3}-\eqref{Eq:22XI18-A4} and the fact that $\psi_1$ is a sub-solution of \eqref{Eq:24XI18-E1} is that 
\begin{equation}
F(x^*,\hat\psi_\eps(x) + \frac{1}{\eps}|x^{*}-x|^{2}, \nabla\hat\psi_\eps(x), \nabla^2\hat\psi_\eps(x)) \geq 1.
	\label{Eq:22XI18-A2}
\end{equation}

\medskip
\noindent \underline{Step 2:} We proceed to derive a contradiction as in \cite{LiNgWang}.
\medskip

For small $\eta > 0$, let $\tau = \tau(\eps,\eta)$ be such that
\[
\eta = \sup_A (\hat\psi_\eps - \psi_2 + \tau).
\]
Then 
\begin{align}
\tau 
	&= \psi_1(\hat x) - \psi_2(\hat x) + \tau \leq \hat\psi_\eps(\hat x) - \psi_2(\hat x) + \tau \leq \eta,
		\label{Eq:04I19-M1}\\
\tau
	&= \eta - \sup_A (\hat\psi_\eps - \psi_2)
			\geq \eta - \sup_A (\hat\psi_\eps - \psi_1).
		\label{Eq:04I19-M2}
\end{align} 

Suppose that $\eps$ and $\eta$ are sufficiently small so that $\xi  := \hat\psi_\eps - \psi_2 + \tau$ is negative on $\partial A$. Let $\Gamma_{\xi^+}$ denote the concave envelop of $\xi^+ = \max(\xi,0)$. Since $\xi$ is semi-convex and $\xi \leq 0$ on $\partial A$, we have by \cite[Lemma 3.5]{CabreCaffBook} that
\[
\int_{\{\xi = \Gamma_{\xi^+}\}} \det(-\nabla^2 \Gamma_{\xi^+}) \geq \frac{1}{C(\Omega)} (\sup_\Omega \xi)^n > 0.
\]
In particular, the set $\{\xi = \Gamma_{\xi^+}\}$ has positive measure. Recall that $\hat \psi_\eps$ and $\psi_2$ is almost everywhere punctually second order differentiable, we can find $y = y_{\eps,\eta} \in \{\xi = \Gamma_{\xi^+}\}$ such that $\hat \psi_\eps$ and $\psi_2$ are punctually second order differentiable at $y$, $|J_2[\psi_2](y)| \leq C \|\psi\|_{C^{1,1}(\bar A)}$, either $(y, J_2[\psi_2](y)) \notin \bar\calU$ or $F(y, J_2[\psi_2](y)) \leq a$, and
\begin{align}
0 < \xi(y) &= \hat\psi_\eps(y) - \psi_2(y) + \tau  \leq \eta
	,\label{Eq:23XI18-C1a}\\
|\nabla\xi(y)| &= |\nabla \hat\psi_\eps(y) - \nabla \psi_2(y)| \leq C\eta
	,\label{Eq:23XI18-C1b}\\
\nabla^2\xi(y) &= \nabla^2 \hat\psi_\eps(y) - \nabla^2 \psi_2(y) \leq 0
	.\label{Eq:23XI18-C1c}
\end{align}

We claim that
\begin{equation}
	\liminf_{\eps \rightarrow 0} \frac{1}{\eps} |y^* - y|^2 \leq \eta, 
	\label{Eq:28XI18-Cl2}
\end{equation}
where $y^* = x^*(\eps, y)$ and $x^*$ is defined in \eqref{Eq:22XI18-A3}.

Let us assume \eqref{Eq:28XI18-Cl2} for now and go on with the proof. From, \eqref{Eq:24XI18-GEllipticity}, \eqref{Eq:22XI18-A2},  \eqref{Eq:23XI18-C1c}, we have $(y^*,\hat\psi_\eps(y) + \frac{1}{\eps}|y^{*}-y|^{2}, \nabla\hat\psi_\eps(y), \nabla^2 \psi_2(y)) \in \bar\calU$ and 
\begin{eqnarray}
1 
	&\stackrel{\eqref{Eq:22XI18-A2}}{\leq}& F(y^*,\hat\psi_\eps(y) + \frac{1}{\eps}|y^{*}-y|^{2}, \nabla\hat\psi_\eps(y), \nabla^2\hat\psi_\eps(y))
	\nonumber\\
	&\stackrel{\eqref{Eq:24XI18-GEllipticity}, \eqref{Eq:23XI18-C1c}}{\leq}& F(y^*,\hat\psi_\eps(y) + \frac{1}{\eps}|y^{*}-y|^{2}, \nabla\hat\psi_\eps(y), \nabla^2 \psi_2(y)).
		\label{Eq:10I19-M1}
\end{eqnarray}

By the boundedness of $J_2[\psi_2](y)$, we may assume that
\begin{equation}
(y, J_2[\psi_2](y))= (y_{\eps,\eta}, J_2[\psi_2](y_{\eps,\eta})) \rightarrow (y_0,p_0) \text{ along a sequence $\eps, \eta \rightarrow 0$}.
	\label{Eq:10I19-M2}
\end{equation}
By \eqref{Eq:04I19-M1}, \eqref{Eq:04I19-M2}, \eqref{Eq:23XI18-C1a} and \eqref{Eq:23XI18-C1b}, we then have
\[
(y^*,\hat\psi_\eps(y) + \frac{1}{\eps}|y^{*}-y|^{2}, \nabla\hat\psi_\eps(y), \nabla^2 \psi_2(y)) \rightarrow (y_0, p_0).
\]
Thus by \eqref{Eq:24XI18-G1level} and \eqref{Eq:10I19-M1}, $(y_0, p_0) \in \calU$ and $F(y_0, p_0) \geq 1$. But this implies, in view of \eqref{Eq:10I19-M2}, that $(y, J_2[\psi_2](y)) \in \calU$ along a sequence $\eps, \eta \rightarrow 0$ and so
\[
1 \leq F(y_0,p_0) = \lim_{\eps,\eta\rightarrow 0} F(y, J_2[\psi_2](y)) \leq a,
\]
which is a contradiction.

To conclude the proof, it remains to establish \eqref{Eq:28XI18-Cl2}.

\medskip
\noindent\underline{Proof of \eqref{Eq:28XI18-Cl2}:} Suppose for some $\eta$ and some sequence $\eps_m \rightarrow 0$  that $\frac{1}{\eps_m} |y_m^* - y_m|^2 \rightarrow d$ where $y_m := y_{\eps_m, \eta}$ and $y_m^* := y_{\eps_m, \eta}^*$. (Note that $\frac{1}{\eps}|x^* - x|^2 \leq C$, so this assumption makes sense.) We need to show that $d \leq \eta$. 

Let $\tau_m = \tau( \eps_m, \eta)$. Without loss of generality, we assume further that $y_m \rightarrow y_0$ and $\tau_m \rightarrow \tau_0$. By the convergence of $y_m$ and of $\frac{1}{\eps_m} |y_m^* - y_m|^2$, we have that $y_m^* \rightarrow y_0$. Thus, by the upper semi-continuity of $\psi_1$, we have
\[
\limsup_{m \rightarrow \infty} \psi_1(y_m^*) \leq \psi_1(y_0).
\]
Hence, by \eqref{Eq:22XI18-A3}, \eqref{Eq:04I19-M1} and the left half of \eqref{Eq:23XI18-C1a}, we have
\begin{eqnarray*}
0
	&\leq& \limsup_{m \rightarrow \infty} \frac{1}{\eps_m} |y_m^* - y_m|^2
		\stackrel{ \eqref{Eq:22XI18-A3}}{=} \limsup_{m \rightarrow \infty} (\psi_1(y_m^*) - \hat\psi_{\eps_m}(y_m))\\
	&\stackrel{\eqref{Eq:23XI18-C1a}}{\leq}& \limsup_{m \rightarrow \infty} (\psi_1(y_m^*) - \psi_2(y_m) + \tau_m)\\
	&\stackrel{\eqref{Eq:04I19-M1}}{\leq}&  \psi_1(y_0) - \psi_2(y_0) + \eta
		= \lim_{m \rightarrow \infty} (\hat\psi_{\eps_m}(y_0) - \psi_2(y_0)) + \eta\\
	&\leq& \lim_{m \rightarrow \infty} \sup_{A} (\hat\psi_{\eps_m} - \psi_2) + \eta
		\leq \sup_{A} (\psi_1 - \psi_2) + \eta
		= \eta.
\end{eqnarray*}
This proves \eqref{Eq:28XI18-Cl2} and concludes the proof.
\end{proof}

By analogous arguments, we have:
\begin{proposition}\label{Prop:WSCPpsiX}
Let $\Omega$ be an open, connected subset of $\RR^n$, $n \geq 1$, $\calU$ be a non-empty open subset of $\bar\Omega \times \RR \times \RR^n \times \Sym_n$ and $F\in C^0(\bar \calU)$ satisfying 
\eqref{Eq:24XI18-GEllipticity} and \eqref{Eq:24XI18-wG1level}.
Assume that 
\begin{enumerate}[(i)]
\item $\psi_1 \in C^{1,1}_{\rm loc}(\Omega;\RR)$ and $\psi_2 \in LSC(\Omega \cup \{\infty\})$ satisfy for some constant $a' > 1$,
\[
F(x,J_2[\psi_1]) \geq a' \text{ and } F(x,J_2[\psi_2]) \leq 1 \text{ in $\Omega$ in the viscosity sense},
\]

\item $\psi_1 \leq \psi_2$ in $\Omega$ and  $\psi_1 < \psi_2$ near $\partial\Omega$.
\end{enumerate}
Then $\psi_1 < \psi_2$ in $\Omega$.
\end{proposition}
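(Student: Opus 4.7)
The plan is to mirror the proof of Proposition~\ref{Prop:WSCPpsi} with the roles of $\psi_1$ and $\psi_2$ swapped, using an inf-convolution of the super-solution $\psi_2$ in place of the sup-convolution of $\psi_1$. Argue by contradiction and suppose $\psi_1(\hat x) = \psi_2(\hat x)$ at some $\hat x \in \Omega$. Since $\psi_2 - \psi_1$ is LSC and strictly positive near $\partial\Omega$, choose a bounded domain $A$ with $\hat x \in A$, $\bar A \subset \Omega$, and $\psi_2 - \psi_1 \geq m > 0$ on $\partial A$ for some $m > 0$. Define
\[
\check\psi_\eps(x) = \inf_{y\in\Omega}\Big(\psi_2(y) + \frac{1}{\eps}|x-y|^2\Big),
\]
which is semi-concave, satisfies $\check\psi_\eps \leq \psi_2$, and increases pointwise to $\psi_2$ as $\eps \to 0^+$. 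If $y^{*} = y^{*}(\eps,x)$ realizes the infimum and $\check\psi_\eps$ is twice punctually differentiable at $x$, combining $\psi_2(y^{*}+z) \geq \check\psi_\eps(x+z) - \frac{1}{\eps}|x-y^{*}|^2$ with the punctual Taylor expansion of $\check\psi_\eps$ at $x$ gives the touching-from-below
\[
\psi_2(y^{*}+z) \geq \psi_2(y^{*}) + \nabla\check\psi_\eps(x) \cdot z + \tfrac{1}{2} z^T \nabla^2 \check\psi_\eps(x) z + o(|z|^2) \text{ as } z \to 0.
\]
Testing with this quadratic perturbed by $-\delta|z|^2$ and letting $\delta \to 0$, as in the proof of Lemma~\ref{Lem:C11Vis}(b), the viscosity super-solution property of $\psi_2$ yields the alternative: either $(y^{*},\psi_2(y^{*}),\nabla\check\psi_\eps(x),\nabla^2 \check\psi_\eps(x)) \notin \bar\calU$ or $F(y^{*},\psi_2(y^{*}),\nabla\check\psi_\eps(x),\nabla^2 \check\psi_\eps(x)) \leq 1$.

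For small $\eta \in (0, m/2)$, choose $\tau = \eta - \sup_A(\psi_1 - \check\psi_\eps)$ so that $\xi := \psi_1 - \check\psi_\eps + \tau$ has $\sup_A \xi = \eta$. Because $\psi_1 - \check\psi_\eps$ is continuous and monotonically decreases to the USC limit $\psi_1 - \psi_2$ as $\eps \to 0^+$, a USC variant of Dini's theorem yields $\sup_{\bar A}(\psi_1 - \check\psi_\eps) \to 0$ and $\sup_{\partial A}(\psi_1 - \check\psi_\eps) \to \sup_{\partial A}(\psi_1 - \psi_2) \leq -m$, so that $\tau \in (0, \eta]$ and $\xi < 0$ on $\partial A$ for $\eps$ sufficiently small. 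The function $\xi$ is semi-convex (as $\psi_1 \in C^{1,1}_{\rm loc}$ and $-\check\psi_\eps$ is semi-convex), so by \cite[Lemma~3.5]{CabreCaffBook} the contact set $\{\xi = \Gamma_{\xi^+}\}$ with the concave envelope of $\xi^+$ has positive measure. Pick $y = y_{\eps,\eta}$ in this contact set at which both $\psi_1$ and $\check\psi_\eps$ are twice punctually differentiable, giving
\[
0 < \xi(y) \leq \eta, \quad |\nabla \psi_1(y) - \nabla \check\psi_\eps(y)| \leq C\eta, \quad \nabla^2 \psi_1(y) \leq \nabla^2 \check\psi_\eps(y),
\]
together with the super-solution alternative above at $y^{*} = y^{*}(\eps,y)$.

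To close the argument, I would establish the analog of \eqref{Eq:28XI18-Cl2}, namely $\liminf_{\eps \to 0} \frac{1}{\eps}|y - y^{*}|^2 \leq \eta$, by repeating the previous argument with the LSC of $\psi_2$ in place of the USC of $\psi_1$. Using ellipticity \eqref{Eq:24XI18-GEllipticity} and $\nabla^2 \psi_1(y) \leq \nabla^2 \check\psi_\eps(y)$, the super-solution alternative is chained down to: either $(y^{*},\psi_2(y^{*}),\nabla \check\psi_\eps(y),\nabla^2 \psi_1(y)) \notin \bar\calU$ or $F$ evaluated there is $\leq 1$. On the other hand, the sub-solution property of $\psi_1$ gives $F(y, J_2[\psi_1](y)) \geq a'$ with $(y, J_2[\psi_1](y)) \in \bar\calU$; since $\psi_1 \in C^{1,1}_{\rm loc}$, the tuple $J_2[\psi_1](y)$ lies in a compact subset of $\bar\calU$, and the two four-tuples at $y$ and at $y^{*}$ differ only in their first three slots by $O(\eta) + O\bigl(\frac{1}{\eps}|y - y^{*}|^2\bigr)$. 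Extracting a convergent subsequence and invoking continuity of $F$ on $\bar\calU$, together with \eqref{Eq:24XI18-wG1level} to exclude the ``outside $\bar\calU$'' alternative in the limit, one arrives at $a' \leq 1$, contradicting $a' > 1$. The main delicate point is managing the dichotomy ``not in $\bar\calU$'' versus ``$F \leq 1$'' in the super-solution conclusion through the successive $\delta \to 0$ and $\eps, \eta \to 0$ limits; the weak level-set hypothesis \eqref{Eq:24XI18-wG1level} is precisely what guarantees that any boundary-of-$\bar\calU$ limit point still satisfies $F \leq 1$, making the quantitative contradiction with the strict sub-solution bound $F \geq a' > 1$ robust.
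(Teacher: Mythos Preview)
Your proposal is correct and follows essentially the same approach as the paper: inf-convolution of the super-solution $\psi_2$, extraction of a contact point $y$ via the concave envelope of $\xi^+$, the key estimate $\liminf_{\eps\to 0}\tfrac{1}{\eps}|y-y^*|^2\le\eta$, and the contradiction obtained by passing to a subsequential limit $(y_0,p_0)$ where $F\ge a'$ forces $(y_0,p_0)\in\calU$ via \eqref{Eq:24XI18-wG1level}, which in turn rules out the ``not in $\bar\calU$'' branch of the super-solution dichotomy. The only point worth tightening is to note explicitly that when selecting $y$ in the contact set you should also require it to lie in the full-measure set where the a.e.\ strict sub-solution inequality $F(y,J_2[\psi_1](y))\ge a'$ holds (via Lemma~\ref{Lem:C11Vis}), rather than relying solely on punctual twice differentiability of $\psi_1$ at $y$.
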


\begin{proof} We argue as in the proof of Proposition \ref{Prop:WSCPpsi}, exchanging the roles of $\psi_1$ and $\psi_2$ and sup-convolution and inf-convolution.

Assume by contradiction that there exists some $\hat{x}\in\Omega$ such that $\psi_{1}(\hat{x})=\psi_{2}(\hat{x})$.

\medskip
\noindent\underline{Step 1:} We regularize $\psi_{2}$ by using inf-convolution.
\medskip 

Take some bounded domain $A$ containing $\hat{x}$ such that $\bar{A}\subset\Omega$ and $\psi_{1}<\psi_{2}$ on $\partial A$.

We define, for small $\varepsilon>0$ and $x\in A$, 
 \begin{equation*}
 \hat{\psi}^{\varepsilon}(x)=\inf\limits_{y\in\Omega}\left(\psi_{2}(y)+\frac{1}{\varepsilon}|x-y|^{2}\right).
 \end{equation*}
 
 It is well-known that $\hat{\psi}^{\varepsilon}\leq \psi_{2}$, $\hat{\psi}^{\varepsilon}$ is semi-concave, $\nabla^{2}\hat{\psi}^{\varepsilon}\leq\frac{2}{\varepsilon}I$ a.e. in $A$, and $\hat{\psi}^{\varepsilon}$ converges monotonically to $\psi_{2}$ as $\varepsilon\rightarrow0$. Furthermore, for every $x\in A$, there exists $x_{*}=x_{*}(\varepsilon,x)$ such that 
 \begin{equation}
 \hat{\psi}^{\varepsilon}(x)=\psi_{2}(x_{*})+\frac{1}{\varepsilon}|x-x_{*}|^{2}.\label{eq21'}
  \end{equation}
  
  We note that if $x$ is a point where $\hat{\psi}^{\varepsilon}$ is punctually second order differentiable, then $\psi_{2}$ `can be touched from below' at $x_{*}$ by a quadratic polynomial:
  \begin{equation}
  \psi_{2}(x_{*}+z)\geq\hat{\psi}^{\varepsilon}(x)-\frac{1}{\varepsilon}|x_{*}-x|^{2}+\nabla\hat{\psi}^{\varepsilon}(x)\cdot z+\frac{1}{2}z^{T}\nabla^{2}\hat{\psi}^{\varepsilon}(x)z+o(|z|^{2}),\quad\mbox{as }z\rightarrow0,\label{eq22'}
  \end{equation}
  which is a consequence of the inequalities 
  \begin{align*}
 \hat{\psi}^{\varepsilon}(x+z)&\geq \hat{\psi}^{\varepsilon}(x)+\nabla\hat{\psi}^{\varepsilon}(x)\cdot z+\frac{1}{2}z^{T}\nabla^{2}\hat{\psi}^{\varepsilon}(x)z+o(|z|^{2}),\quad\mbox{as }z\rightarrow0,\\
 \hat{\psi}^{\varepsilon}(x+z)&\leq \psi_{2}(x_{*}+z)+\frac{1}{\varepsilon}|x_{*}-x|^{2}.
   \end{align*}
   (Here we have used the definition of $\hat{\psi}^{\varepsilon}$ in the last inequality.)
   
   An immediate consequence of (\ref{eq21'})-(\ref{eq22'}) and the fact that $\psi_{2}$ is a super-solution of (9) is that either   \begin{equation}\label{add1}
   (x_{*},\hat{\psi}^{\varepsilon}(x)-\frac{1}{\varepsilon}|x_{*}-x|^{2},\nabla\hat{\psi}^{\varepsilon}(x),\nabla^{2}\hat{\psi}^{\varepsilon}(x))\notin\bar{\calU} ,
   \end{equation}
   or 
   \begin{equation}
   F(x_{*},\hat{\psi}^{\varepsilon}(x)-\frac{1}{\varepsilon}|x_{*}-x|^{2},\nabla\hat{\psi}^{\varepsilon}(x),\nabla^{2}\hat{\psi}^{\varepsilon}(x))\leq1.\label{add2}
   \end{equation}

\medskip
\noindent\underline{Step 2:} We proceed to derive a contradiction as in \cite{LiNgWang}.
\medskip

For small $\eta>0$, let $\tau=\tau(\varepsilon,\eta)$ be such that 
\begin{equation*}
\eta=\sup\limits_{A}(\psi_{1}-\hat{\psi}^{\varepsilon}+\tau).
\end{equation*}
Then 
\begin{align}
\tau&=\psi_{1}(\hat{x})-\psi_{2}(\hat{x})+\tau\leq \psi_{1}(\hat{x})-\hat{\psi}^{\varepsilon}(\hat{x})+\tau\leq\eta,\label{eq24'}\\
\tau&=\eta-\sup\limits_{A}(\psi_{1}-\hat{\psi}^{\varepsilon})\geq \eta-\sup\limits_{A}(\psi_{2}-\hat{\psi}^{\varepsilon}).\label{eq25'}
\end{align}

Suppose that $\varepsilon$ and $\eta$ are sufficiently small so that $\xi:=\psi_{1}-\hat{\psi}^{\varepsilon}+\tau$ is negative on $\partial A$. Let $\Gamma_{\xi^{+}}$ denote the concave envelop of $\xi^{+}=\max\{\xi,0\}$. Since $\xi$ is semi-convex and $\xi\leq0$ on $\partial A$, we have by [8,Lemma 3.5] that 
 \begin{equation*}
 \int_{\{\xi=\Gamma_{\xi^{+}}\}}\det(-\nabla^{2}\Gamma_{\xi^{+}})\geq\frac{1}{C(\Omega)}(\sup\limits_{\Omega}\xi)^{n}>0.
 \end{equation*}
In particular, the set $\{\xi=\Gamma_{\xi^{+}}\}$ has positive measure. Recall that $\hat{\psi}^{\varepsilon}$ and $\psi_{1}$ are almost everywhere punctually second order differentiable, we can find $y=y_{\varepsilon,\eta}\in\{\xi=\Gamma_{\xi^{+}}\}$ such that $\hat{\psi}^{\varepsilon}$ and $\psi_{1}$ are punctually second order differentiable at $y$, $|J_{2}[\psi_{1}](y)|\leq C\|\psi_{1}\|_{C^{1,1}(\bar{A})}$, 
\begin{align}
0<\xi(y)&=\psi_{1}(y)-\hat{\psi}^{\varepsilon}(y)+\tau\leq\eta,\label{eq26'}\\
|\nabla\xi(y)|&=|\nabla\psi_{1}(y)-\nabla\hat{\psi}^{\varepsilon}(y)|\leq C\eta,\label{eq27'}\\
\nabla^{2}\xi(y)&=\nabla^{2}\psi_{1}(y)-\nabla^{2}\hat{\psi}^{\varepsilon}(y)\leq0,\label{eq28'}
\end{align}
and 
\begin{equation}
(y,J_{2}[\psi_{1}](y))\in \bar{\calU},~~F(y,J_{2}[\psi_{1}](y))\geq a'.\label{add3}
\end{equation}

We claim that 
\begin{equation}
\liminf\limits_{\varepsilon\rightarrow0}\frac{1}{\varepsilon}|y_{*}-y|^{2}\leq\eta,\label{eq29'}
\end{equation}
where $y_{*}=x_{*}(\varepsilon,y)$ and $x_{*}$ is defined in (\ref{eq21'}).

Let us assume (\ref{eq29'}) for now and go on with the proof. As in Case 1, we may assume that $(y,J_{2}[\psi_{1}](y)) \rightarrow (y_0, p_0)$ as $\eps, \eta \rightarrow 0$. By \eqref{add3}, $F(y_0,p_0) \geq a'$ and so by \eqref{Eq:24XI18-wG1level}, $(y_0,p_0) \in \calU$. Also, by \eqref{eq24'}, \eqref{eq25'}, \eqref{eq26'} and \eqref{eq27'}, 
\[
\big(y_{*},\hat{\psi}^{\varepsilon}(y)-\frac{1}{\varepsilon}|y_{*}-y|^{2},\nabla\hat{\psi}^{\varepsilon}(y),\nabla^{2}\psi_{1}(y)\big) \rightarrow (y_0, p_0) \text{ as } \eps, \eta \rightarrow 0,
\]
and so
\[
\big(y_{*},\hat{\psi}^{\varepsilon}(y)-\frac{1}{\varepsilon}|y_{*}-y|^{2},\nabla\hat{\psi}^{\varepsilon}(y),\nabla^{2}\psi_{1}(y)\big) \in \calU \text{ along a sequence } \eps, \eta \rightarrow 0,
\]
Now, we have by \eqref{Eq:24XI18-GEllipticity} and \eqref{eq28'} that \eqref{add2} holds at $x = y$ and so
   \begin{eqnarray*}
   1&\stackrel{\eqref{add2}}{\geq}& F(y_{*},\hat{\psi}^{\varepsilon}(y)-\frac{1}{\varepsilon}|y_{*}-y|^{2},\nabla\hat{\psi}^{\varepsilon}(y),\nabla^{2}\hat{\psi}^{\varepsilon}(y))\\
   &\stackrel{\eqref{eq28'}}{\geq}& F(y_{*},\hat{\psi}^{\varepsilon}(y)-\frac{1}{\varepsilon}|y_{*}-y|^{2},\nabla\hat{\psi}^{\varepsilon}(y),\nabla^{2}\psi_{1}(y)) \\
    &=& F(y,J_{2}[\psi_{1}](y))+o_{\varepsilon,\eta}(1)\\
   &\stackrel{\eqref{add3}}{\geq}& a'+o_{\varepsilon,\eta}(1),
   \end{eqnarray*}
where $\lim\limits_{\varepsilon,\eta\rightarrow0}o_{\varepsilon,\eta}(1)=0$ and where we have used the (local uniform) continuity of $F$ in the second-to-last equality. This gives a contradiction as $a'>1$.

To conclude the proof, it remains to establish (\ref{eq29'}).

\medskip
\noindent\underline{Proof of (\ref{eq29'}):} Suppose for some $\eta>0$ and some sequence $\varepsilon_{m}\rightarrow0$ that $\frac{1}{\varepsilon_{m}}|(y_{m})_{*}-y_{m}|^{2}\rightarrow d$ where $y_{m}:=y_{\varepsilon_{m},\eta}$ and $(y_{m})_{*}:=(y_{\varepsilon_{m},\eta})_{*}$. (Note that $\frac{1}{\varepsilon}|x_{*}-x|^{2}\leq C$, so this assumption makes sense.) We need to show that $d\leq\eta$.         
 
 Let $\tau_{m}=\tau(\varepsilon_{m},\eta)$. Without loss of generality, we assume further that $y_{m}\rightarrow y_{0}$ and $\tau_{m}\rightarrow\tau_{0}$. By the convergence of $y_{m}$ and of $\frac{1}{\varepsilon_{m}}|(y_{m})_{*}-y_{m}|^{2}$, we have that $(y_{m})_{*}\rightarrow y_{0}$. Thus, by the lower semi-continuity of $\psi_{2}$, we have 
 \begin{equation*}
 \liminf\limits_{m\rightarrow\infty}\psi_{2}((y_{m})_{*})\geq\psi_{2}(y_{0}).
 \end{equation*}
 Hence, by (\ref{eq21'}), (\ref{eq24'}) and the left half of (\ref{eq26'}), we have 
 \begin{eqnarray*}
 0&\leq& \liminf\limits_{m\rightarrow\infty}\frac{1}{\varepsilon_{m}}|(y_{m})_{*}-y_{m}|^{2} \stackrel{(\ref{eq21'})}{=}\liminf\limits_{m\rightarrow\infty}\big(\hat{\psi}^{\varepsilon_{m}}(y_{m})-\psi_{2}((y_{m})_{*})\big)\\
 & \stackrel{(\ref{eq26'})}{\leq} & \liminf\limits_{m\rightarrow\infty}(\psi_{1}(y_{m})-\psi_{2}((y_{m})_{*})+\tau_{m})\\
 & \stackrel{(\ref{eq24'})}{\leq}&\psi_{1}(y_{0})-\psi_{2}(y_{0})+\eta=\lim\limits_{m\rightarrow\infty}(\psi_{1}(y_{0})-\hat{\psi}^{\varepsilon_{m}}(y_{0}))+\eta\\
 &\leq& \lim\limits_{m\rightarrow\infty}\sup\limits_{A}(\psi_{1}-\hat{\psi}^{\varepsilon_{m}})+\eta\leq \sup\limits_{A}(\psi_{1}-\psi_{2})+\eta=\eta.
         \end{eqnarray*}
         This proves (\ref{eq29'}) and concludes the proof.
         
\end{proof}

We now give the

\begin{proof}[Proof of Theorem \ref{Thm:SCPpsi}] Arguing by contradiction, suppose the conclusion is wrong, then we can find a closed ball $\bar B \subset \Omega$ of radius $R > 0$ and a point $\hat x \in \partial B$ such that
\[
\psi_1 < \psi_2 \text{ in } \bar B \setminus \{\hat x\} \text{ and } \psi_1(\hat x) = \psi_2(\hat x). 
\]
Without loss of generality, we assume the center of $B$ is the origin.

\medskip
\noindent\underline{Case 1:} Consider first the case $\psi_2$ is $C^{1,1}$. 
\medskip

In the proof, $C$ denotes some generic constant which may vary from lines to lines but depends only on an upper bound for $\|\psi_2\|_{C^{1,1}(\bar\Omega)}$, $\Omega$ and $(F,\calU)$.

In view of Proposition \ref{Prop:WSCPpsi}, it suffices to deform $\psi_2$ to a strict super-solution $\tilde\psi_2$ in some open ball $A$ around $\hat x$ such that $\tilde\psi_2 > \psi_1$ on $\partial A$ and $\inf_{A} (\tilde\psi_2 - \psi_1) = 0$.
We adapt the argument in  \cite{CafLiNir11}, which assumes that $\psi_2$ is $C^2$.

Using that $\psi_2$ is $C^{1,1}$, a theorem of Alexandrov, Buselman and Feller (see e.g. \cite[Theorem 1.5]{CabreCaffBook}) and Lemma \ref{Lem:C11Vis}, we can find some $\Lambda > 0$ and a set $Z$ of zero measure such that $\psi_2$ is punctually second order differentiable in $\Omega \setminus Z$,
\begin{equation}
|J_2[\psi_2]|   \leq \Lambda \text{ in } \Omega \setminus Z.
	\label{Eq:05I19-V2}
\end{equation}
and
\begin{equation}
\text{ either } (x,J_2[\psi_2](x)) \notin \bar\calU \text{ or } F(x,J_2[\psi_2](x)) \leq 1 \text{ in } \Omega \setminus Z.
	\label{Eq:05I19-V1}
\end{equation}

By \eqref{Eq:24XI18-G1level}, there is some small constant $\theta_0 > 0$
\[
F(x,s,p,M) \leq 1 - 2\theta_0 \text{ for all } x\in \bar\Omega, (s,p,M) \in \partial \calU_x,  |s| + |p| + |M| \leq \Lambda + 2.
\]
Hence
\[
\calK := \Big\{(x,s,p,M) \in \calU: F(x,s,p,M) \geq 1 - \theta_0 , x \in \bar\Omega,
	 |s| + |p| + |M| \leq \Lambda + 1\Big\} 
\]
and
\begin{multline*}
\calK' := \Big\{(x,s,p,M) \in \calU: F(x,s,p,M) \geq 1 - \theta_0/2 , x \in \bar\Omega,\\
	 |s| + |p| + |M| \leq \Lambda + 1/2\Big\} \subset \calK 
\end{multline*}
are compact.

For $\alpha > 1$, $\mu > 0$ and $\tau > 0$ which will be fixed later, let 
\begin{align}
E(x) 
	&= E_\alpha(x)
	= e^{-\alpha|x |^2},\nonumber\\
h(x) 
	&= h_\alpha(x)
	= e^{-\alpha|x |^2} - e^{-\alpha R^2},\nonumber\\
\zeta(x) 
	&= \zeta_\alpha(x) = \cos(\alpha^{1/2}(x_1 - \hat x_1)),\nonumber\\
\tilde \psi_{\mu,\tau} 
	&=  \psi_2 - \mu \,(h - \tau) \zeta.
	\label{Eq:05I19-X1}
\end{align}
Let $A$ be a ball centered at $\hat x$ such that $\zeta > \frac{1}{2}$ in $A$ and $\tau_0 = \sup_A h > 0$. 

It is clear that, for $0 \leq \tau \leq \tau_0$ and all sufficiently small $\mu$,
\begin{equation*}
\text{$\tilde\psi_{\mu, \tau} > \psi_1$ on $\partial A$}.
\end{equation*}

We compute
\begin{align*}
\nabla \tilde\psi_{\mu,\tau}(x)
	&= \nabla \psi_2(x) + 2\mu\,\alpha\,E\,\zeta\,x + \mu \alpha^{1/2} (h - \tau) \sin(\alpha^{1/2} (x_1 - \hat x_1)) e_1,\\
\nabla^2 \tilde\psi_{\mu,\tau}(x)
	&= \nabla^2 \psi_2(x)
		- 2\mu \alpha E\,\zeta (2\alpha\,x \otimes x  -  I)\\
		&\qquad - 2\mu\,\alpha^{3/2} \,E\, \sin(\alpha^{1/2} (x_1 - \hat x_1)(e_1 \otimes x + x \otimes e_1) \\
		&\qquad + \mu \alpha (h - \tau) \zeta e_1 \otimes e_1.
\end{align*}
We thus have
\[
J_2[\tilde\psi_{\mu,\tau}](x) = J_2[\psi_2](x) - \big(0,0,4\mu\,\alpha^2\,E \zeta \,x \otimes  x + \mu \tau \alpha \zeta e_1 \otimes e_1\big) + O(\mu (\alpha^{3/2} E + \alpha^{1/2}\tau)).
\]

Now if $x \in A \setminus Z$ is such that
\begin{multline*}
(x,J_2[\tilde\psi_{\mu,\tau}](x)), (x,J_2[\psi_2](x))\\
	 \text{and } (x,J_2[\tilde\psi_{\mu,\tau}] + (0,0,4\mu\,\alpha^2\,E \zeta \,x \otimes  x + \mu \tau \alpha \zeta e_1 \otimes e_1) \text{ lie in $\calK$},
\end{multline*}
then
\begin{align*}
&F(x,J_2[\psi_2]) + C\mu\alpha^{3/2} E + C\mu \tau \alpha^{1/2}\\
	&\qquad \stackrel{\eqref{Eq:24XI18-GLip}}{\geq} F\big(x,J_2[\tilde\psi_{\mu,\tau}] + (0,0,4\mu\,\alpha^2\,E \zeta \,x \otimes  x + \mu \tau \alpha \zeta e_1 \otimes e_1)\big)\\
	&\qquad \stackrel{\eqref{Eq:24XI18-GLSEll}}{\geq} F\big(x,J_2[\tilde\psi_{\mu,\tau}]) + \frac{1}{C} \mu\,\alpha^2\,E  + \frac{1}{C}\mu \tau \alpha,
\end{align*}
and so, by selecting a sufficiently large $\alpha$, we thus obtain for some $\beta > 0$ and all sufficiently small $\mu$,
\begin{align}
F\big(x,J_2[\tilde\psi_{\mu,\tau}])
	&\leq F\big(x,J_2[\psi_2]) - \beta \mu \stackrel{\eqref{Eq:05I19-V1}}{\leq}  1 - \beta \mu.
		\label{Eq:05I19-W1}
\end{align}

Now for every $x \in A \setminus Z$ satisfying
$J_2[\tilde\psi_{\mu,\tau}](x) \in \calU_x$ and $F(x,J_2[\tilde\psi_{\mu,\tau}](x)) \geq 1 - \theta_0/2$,
we have, in view of \eqref{Eq:05I19-V2}, that $|J_2[\tilde\psi_{\mu,\tau}](x)| \leq \Lambda + 1/2$ and so $(x,J_2[\tilde\psi_{\mu,\tau}](x))$ lies in $\calK'$ for all small $\mu$. By squeezing $\mu$ further, we then have that $(x, J_2[\psi_2](x))$ and $(x,J_2[\tilde\psi_{\mu,\tau}] + (0,0,4\mu\,\alpha^2\,E \zeta \,x \otimes  x + \mu \tau \alpha \zeta e_1 \otimes e_1)$  lie in $\calK$. In particular, \eqref{Eq:05I19-W1} holds.

Taking $\tilde\beta = \min (\beta, \frac{\theta_0}{2\mu})$, we thus obtain that
\begin{equation*}
\text{either } J_2[\tilde\psi_{\mu,\tau}](x) \notin \bar\calU_x \text{ or } F(x,J_2[\tilde\psi_{\mu,\tau}](x)) \leq 1 - \tilde \beta \mu\text{ in } A\setminus Z.
\end{equation*}
Noting that
\begin{equation*}
 \inf_A(  \tilde\psi_{\mu, 0} - \psi_1) \leq 0 \leq \inf_A(  \tilde\psi_{\mu, \tau_0} - \psi_1),
\end{equation*}
we can select $\tau_1 \in [0,\tau_0]$ such that
\[
\inf_A(  \tilde\psi_{\mu, \tau_1} - \psi_1) = 0.
\]
The desired $\tilde\psi_2$ is taken to be $\tilde \psi_{\mu,\tau_1}$. The conclusion follows from Proposition \ref{Prop:WSCPpsi}.

\medskip
\noindent\underline{Case 2:} Consider now the case $\psi_1$ is $C^{1,1}$.
\medskip

The proof is similar. $C$ will now denote some generic constant which depends only on an upper bound for $\|\psi_1\|_{C^{1,1}(\bar\Omega)}$, $\Omega$ and $(F,\calU)$.

In view of Proposition \ref{Prop:WSCPpsiX}, it suffices to deform $\psi_{1}$ to a strict sub-solution $\tilde{\psi}_{1}$ in some open ball $A$ around $\hat{x}$ such that $\psi_{2}>\tilde{\psi}_{1}$ on $\partial A$ and $\inf\limits_{A}(\psi_{2}-\tilde{\psi}_{1})=0$.

Using that $\psi_{1}$ is $C^{1,1}$, a theorem of Alexandrov, Buselman and Feller and Lemma \ref{Lem:C11Vis}, we can find some $\Lambda>0$ and a set $Z$ of zero measure such that $\psi_{1}$ is puntually second order differentiable in $\Omega \setminus Z$, 
\begin{equation*}
|J_{2}[\psi_{1}]|\leq \Lambda~\mbox{in }\Omega \setminus Z,\label{eq30'}
\end{equation*}  
and, by \eqref{Eq:24XI18-G1level},
\begin{equation}
(x,J_{2}[\psi_{1}](x))\in \calU ~\mbox{and }F(x,J_{2}[\psi_{1}](x))\geq1~\mbox{in }\Omega \setminus Z.\label{eq31'}
\end{equation}

For $\alpha>1$, $\mu>0$ and $\tau>0$ which will be fixed later, let $E, h, \zeta, A, \tau_0$ be as in Case 1, and amend the definition of $\tilde\psi_{\mu,\tau}$ to
\begin{align}
\tilde{\psi}_{\mu,\tau}&=\psi_{1}+\mu(h-\tau)\zeta.\label{eq32'}
\end{align}

It is clear that, for $0\leq\tau\leq\tau_{0}$ and all sufficiently small $\mu$, 
\begin{equation*}
\tilde{\psi}_{\mu,\tau}<\psi_{2}~\mbox{on }\partial A.
\end{equation*}

As before, we have
\begin{equation*}
J_{2}[\tilde{\psi}_{\mu,\tau}](x)=J_{2}[\psi_{1}](x) +(0,0,4\mu\alpha^{2}E\zeta x\otimes x+\mu\tau \alpha\zeta e_{1}\otimes e_{1})+O(\mu(\alpha^{3/2}E+\alpha^{1/2}\tau)).
\end{equation*}

It is clear from \eqref{eq31'} that $(x,J_{2}[\psi_{1}](x))$ belongs to $\calK'$ for all $x \in A \setminus Z$. We thus have for all sufficiently small $\mu$ and $x \in A \setminus Z$ that 
\begin{align*}
&(x,J_{2}[\tilde{\psi}_{\mu,\tau}]),~(x,J_{2}[\psi_{1}])\\
&\quad\quad\quad\quad\quad\mbox{ and }(x,J_{2}[\tilde{\psi}_{\mu,\tau}]-(0,0,4\mu\alpha^{2}E\zeta x\otimes x+\mu\tau \alpha\zeta e_{1}\otimes e_{1}))\mbox{ lie in }\calK.
\end{align*}
Therefore,
\begin{eqnarray*}
\lefteqn{F(x,J_{2}[\psi_{1}]) -C\mu\alpha^{3/2}E-C\mu\tau\alpha^{1/2}} \\
&\stackrel{\eqref{Eq:24XI18-GLip}}{\leq}& F\big(x,J_{2}[\tilde{\psi}_{\mu,\tau}]-(0,0,4\mu\alpha^{2}E\zeta x\otimes x+\mu\tau \alpha\zeta e_{1}\otimes e_{1})\big)\\
&\stackrel{\eqref{Eq:24XI18-GLSEll}}{\leq}& F(x,J_{2}[\tilde{\psi}_{\mu,\tau}])-\frac{1}{C}\mu\alpha^{2}E-\frac{1}{C}\mu\tau\alpha,
\end{eqnarray*}
and so, by selecting a sufficiently large $\alpha$, we thus obtain for some $\beta>0$ and all sufficiently small $\mu$,
\begin{equation*}
F(x,J_{2}[\tilde{\psi}_{\mu,\tau}])\geq F(x,J_{2}[\psi_{1}])+\beta\mu  \stackrel{\eqref{eq31'}}{\geq} 1+\beta\mu.
\end{equation*}

Noting that 
\begin{equation*}
\inf\limits_{A}(\psi_{2}-\tilde{\psi}_{\mu,0})\leq 0\leq \inf\limits_{A}(\psi_{2}-\tilde{\psi}_{\mu,\tau_{0}}),
\end{equation*}
we can select $\tau_{1}\in[0,\tau_{0}]$ such that 
\begin{equation*}
\inf\limits_{A}(\psi_{2}-\tilde{\psi}_{\mu,\tau_{1}})=0.
\end{equation*} 
The desired $\tilde{\psi}_{1}$ is taken to be $\tilde{\psi}_{\mu,\tau_{1}}$. The conclusion follows from Proposition \ref{Prop:WSCPpsiX} (and Lemma \ref{Lem:C11Vis}).
\end{proof}

\subsection{Proof of the Hopf Lemma}

\begin{proof}[Proof of Theorem \ref{Thm:Hopfpsi}]
We will only consider the case that $\psi_2$ is $C^{1,1}$, since the case when $\psi_1$ is $C^{1,1}$ can be treated similarly.

Since $\partial\Omega$ is $C^2$ near $\hat x$, we can find a ball $B$ such that $\bar B \subset \Omega \cup \{\hat x\}$ and $\hat x \in \partial B$. Thus we may assume without loss of generality that $\Omega = B$ is a ball centered at the origin, $u_1$ and $u_2$ are defined on $\bar B$ and $u_1 < u_2$ in $\bar B \setminus \{\hat x\}$.

The function $\tilde\psi_{\mu,\tau} = \psi_2 - \mu(h - \tau)\zeta$ defined by \eqref{Eq:05I19-X1} in the proof of Theorem \ref{Thm:SCPpsi} satisfies for some open ball $A$ centered at $\hat x$, some constant $\beta > 0$, and all $0 \leq \tau \leq \tau_0 := \sup_{A \cap B} h$ that
\begin{equation}
\text{either } J_2[\tilde\psi_{\mu,\tau}](x) \notin \bar\calU_x \text{ or } F(x,J_2[\tilde\psi_{\mu,\tau}](x)) \leq 1 -  \beta\mu \text{ a.e. in } A \cap B.
	\label{Eq:03I19-N2}
\end{equation}
If $\tilde\psi_{\mu,0} \geq \psi_1$ in $A \cap B$ for some $\mu > 0$, we are done by the explicit form of $h$. Suppose otherwise that 
\[
\inf_{A \cap B} ( \tilde\psi_{\mu,0} - \psi_1) < 0 .
\]
Noting that 
\[
0 \leq \inf_{A \cap B}(  \tilde\psi_{\mu, \tau_0} - \psi_1),
\]
we can find $\tau_1 \in (0,\tau_0]$ such that 
\[
\inf_{A \cap B} ( \tilde\psi_{\mu,\tau_1} - \psi_1) = 0.
\]
Recall the definition of $h$, we have also that
\[
\inf_{\partial (A \cap B)} ( \tilde\psi_{\mu,\tau_1} - \psi_1) > 0.
\]
Recalling \eqref{Eq:03I19-N2}, we obtain a contradiction to Proposition \ref{Prop:WSCPpsi}.
\end{proof}


\section{Proof of the Liouville theorem}\label{Sec:Liou}

In this section, we prove our Liouville theorem. Let us start with some preliminary. Define
\[
U = \{M \in \Sym_n: \lambda(U) \in \Gamma\}
\]
and
\[
F(M) = f(\lambda(M)).
\]
By \eqref{Eq:02I19-Sym}-\eqref{Eq:02I19-Gam1}, we have
\begin{enumerate}[(i)]
\item $(F,U)$ is elliptic, i.e. 
\begin{equation}
\text{if } M \in U \text{ and } N \geq 0 \text{, then } M + N \in U \text{ and } F(M + N) \geq F(M).
	\label{Eq:21XI18-Ellipticity}
\end{equation}
\item $(F,U)$ is locally strictly elliptic, i.e. for any compact subset $K$ of $U$, there is some constant $\delta(K) > 0$ such that
\begin{equation}
F(M + N) - F(M) \geq \delta(K) |N| \text{ for all } M \in K, N \geq 0.
	\label{Eq:21XI18-LSEll}
\end{equation}

\item $F$ is locally Lipschitz, i.e. for any compact subset $K$ of $U$, there is some constant $C(K) > 0$ such that
\begin{equation}
|F(M') - F(M)| \leq C(K) |M' - M| \text{ for all } M, M' \in K.
	\label{Eq:21XI18-LLip}
\end{equation}

\item The $1$-superlevel set of $F$ stays in $U$, namely
\begin{equation}
F^{-1}([1,\infty)) \subset U .
	\label{Eq:21XI18-1level}
\end{equation}

\item $(F,U)$ is invariant under the orthogonal group $O(n)$, i.e. 
\begin{equation}
\text{if } M \in U \text{ and } R \in O(n) \text{, then } R^t M R  \in U \text{ and } F(R^t M R) = F(M).
	\label{Eq:21XI18-O(n)Inv}
\end{equation}

\item $U$ satisfies
\begin{equation}
\tr(M) \geq 0 \text{ for all } M \in U.
	\label{Eq:23XI18-Gam1}
\end{equation}
\end{enumerate}

From \eqref{Eq:21XI18-Ellipticity}-\eqref{Eq:21XI18-1level}, we see that the strong comparison principle (Theorem \ref{Thm:SCPpsi}) and the Hopf Lemma (Theorem \ref{Thm:Hopfpsi}) are applicable to the equation $F(A^u) = 1$ by setting $\psi = -\ln u$.

An essential ingredient for our proof is a conformal property of the conformal Hessian $A^w$, inherited from the conformal structure of $\RR^n$. Recall that a map $\varphi: \RR^n \cup\{\infty\} \rightarrow \RR^n \cup\{\infty\}$ is called a M\"obius transformation if it is the composition of finitely many translations, dilations and inversions. Now if $\varphi$ is a M\"obius transformation and if we set $w_\varphi = |J_\varphi|^{\frac{n-2}{2n}}w \circ \varphi$ where $J_\varphi$ is the Jacobian of $\varphi$, then
\[
A^{w_\varphi}(x) = O_\varphi(x)^t A^w(\varphi(x)) O_\varphi(x)
\]
for some orthogonal $n \times n$ matrix $O_\varphi(x)$. In particular, by \eqref{Eq:21XI18-O(n)Inv},
\begin{equation}
F(A^{w_\varphi}(x)) = F(A^w( \varphi(x))).
	\label{Eq:CIProp}
\end{equation}

\medskip
\begin{proof}[Proof of Theorem \ref{Thm:Liouville}] Having established the Hopf Lemma and the strong comparison principle, we can follow the proof of \cite[Theorem 1.1]{LiNgSymmetry}, which draws on ideas from \cite{LiLi05}, to reach the conclusion. We give a sketch here for readers' convenience. For details, see \cite[Section 2]{LiNgSymmetry}.

We use the method of moving spheres. For a function $w$ defined on a subset of $\RR^n$, we define
\[
w_{x,\lambda}(y) = \frac{\lambda^{n-2}}{|y - x|^{n-2}}w\Big(x + \frac{\lambda^2(y -x )}{|y -x|^2}\Big)
\]
wherever the expression makes sense.

\medskip
\noindent\underline{Step 1:} We set up the moving sphere method.
\medskip

Since $v_k$ is locally uniformly bounded, local gradient estimates (see e.g. \cite[Theorem 2.1]{LiNgSymmetry}, \cite[Theorem 1.10]{Li09-CPAM}), imply that $|\nabla v_k|$ is locally uniformly bounded and so $v_k$ converges to $v$ in $C^{0,\alpha}_{loc}(\RR^n)$ and $v \in C^{0,1}_{loc}(\RR^n)$.

We note that, by \eqref{Eq:23XI18-Gam1}, $v$ is super-harmonic. Thus, by the positivity of $v$ and the maximum principle, we have
\begin{equation}
v(y)\ge
\frac{1}{C} (1+|y|)^{2-n}  \text{ for all } y \in \RR^n,
	\label{Eq:vSuperhar}
\end{equation}
and so we may also assume without loss of generality that 
\begin{equation}
\|v_k - v\|_{C^0(B_{R_k}(0))} \leq R_k^{-n} \text{ and } 
v_k(y)\ge
\frac{1}{C} (1+|y|)^{2-n}  \text{ for all } y \in B_{R_k}(0).
	\label{Eq:vkSuperhar}
\end{equation}

Using \eqref{Eq:vkSuperhar} and the local uniform boundedness of $|\nabla v_k|$, one can show that there is a function $\lambda^{(0)}: \RR^n \rightarrow (0,\infty)$ such that for all $k$,
\begin{equation}
(v_k)_{x,\lambda} \le v_k\
\mbox{in}\ B_{R_k}(0) \setminus B_\lambda(x), \forall \ 0<\lambda<
\lambda^{(0)}(x), |x| < R_k/5.
	\label{Eq:23XI18-X1}
\end{equation}
See \cite[Lemma 2.2]{LiNgSymmetry}.

Define, for $|x| < R_k/5$,
\[
\bar \lambda_k(x)
=\sup\Big\{ 0 < \mu < R_k/5: 
u_{x,\lambda} \le u \text{ in } B_{R_k}(0) \setminus B_\lambda(x), \forall 0<\lambda<\mu\Big\}.
\]
By \eqref{Eq:23XI18-X1}, $\bar\lambda_k(x) \in [\lambda^{(0)}(x),R_k/5]$. Set
\[
\bar\lambda(x) = \liminf_{k \rightarrow \infty} \bar\lambda_k(x) \in  [\lambda^{(0)}(x),\infty].
\]
$\bar\lambda(x)$ is sometimes referred to as the moving sphere radius of $v$ at $x$,

\medskip
\noindent\underline{Step 2:} We show that if $\bar \lambda(x)<\infty$ for some $x\in \RR^n$,
then 
\begin{equation}
\alpha := \liminf_{|y|\to \infty} |y|^{n-2}u(y)  =  \lim_{|y|\to \infty}
|y|^{n-2}v_{x,\bar\lambda(x)}(y)
=\bar \lambda(x)^{n-2}v(x)
<\infty.
	\label{Eq:23XI18-X4s}
\end{equation}
(Note that $\alpha > 0$ by \eqref{Eq:vSuperhar}.)

We have
\[
(v_k)_{x,\bar \lambda_k(x)} \le v_k\
\mbox{in}\ \RR^n\setminus B_{\bar \lambda_k(x)}(x), 
\]
By the conformal invariance of the conformal Hessian \eqref{Eq:CIProp}, 
$(v_k)_{x,\bar \lambda_k(x)}$ satisfies
\[
F(A^{(v_k)_{x,\bar \lambda_k(x)}})=1
\quad \text{ in }\ \RR^n\setminus \overline{  B_{\bar \lambda_k(x)}(x) }.
\]
We can now apply the strong comparison principle (Theorem \ref{Thm:SCPpsi}) and the Hopf Lemma (Theorem \ref{Thm:Hopfpsi}) to conclude that there exists $y_k \in \partial B_{R_k}(0)$ such that $(v_k)_{x,\bar \lambda_k(x)} = v_k(y_k)$. (See the proof of \cite[Lemma 4.5]{LiLi05}.) 

It follows that
\begin{align*}
\alpha 
	&\leq \liminf_{k\to\infty} |y_k|^{n-2} v(y_k)
		=  \liminf_{k\to\infty} |y_k|^{n-2} v_k(y_k)\\
	&= \liminf_{k\to\infty} |y_k|^{n-2} (v_k)_{x,\bar \lambda_k(x)}(y_k)
		=(\bar \lambda(x))^{n-2}v(x) < \infty.
\end{align*}
The opposite inequality that $\alpha  \geq (\bar \lambda(x))^{n-2}v(x)$ is an easy consequence of the inequality $v_{ x, \bar \lambda(x) }\le v$ in $\RR^n\setminus B_{\bar \lambda(x) }(x)$. This proves \eqref{Eq:23XI18-X4s}.

\medskip
\noindent\underline{Step 3:} We show that either $v$ is constant or $\bar\lambda(x) < \infty$ for all $x \in \RR^n$.

Suppose that $\bar\lambda(x_0) = \infty$ for some $x_0$. Then we hve
\[
v_{x_0,\lambda} \leq v \text{ in } \RR^n \setminus B_\lambda(x_0) \text{ for all } \lambda > 0.
\]
It follows that, for every unit vector $e$, the function $r \mapsto r^{\frac{n-2}{2}} v(x_0 + re)$ is non-decreasing. It follows that
\[
r^{n-2} \inf_{\partial B_r(x_0)} v \geq r^{\frac{n-2}{2}} \inf_{\partial B_1(x_0)} v
\]
and so
\[
\alpha = \liminf_{|y| \rightarrow \infty} |y|^{n-2} v(y) = \infty.
\]
Thus, by Step 2 above, we have $\bar\lambda(x) = \infty$ for all $x \in \RR^n$. This implies that $v$ is constant; see e.g. \cite{Li06-JFA}, \cite[Lemma C.1]{LiNgSymmetry}. This implies that $0 \in \Gamma$ and $f(0) = 1$.

\medskip
\noindent\underline{Step 4:} By Steps 2 and 3, it remains to consider the case where, for every $x \in \RR^n$, there exists $0 < \bar\lambda(x) < \infty$ such that 
\begin{enumerate}[(i)]
\item $v_{x,\bar\lambda(x)} \leq v$ in $\RR^n \setminus B_{\bar\lambda(x)}(x)$,
\item and
\[
\alpha = \lim_{|y| \rightarrow \infty} |y|^{n-2} v(y) = \lim_{|y| \rightarrow \infty} |y|^{n-2} v_{x,\bar\lambda(x)}(y).
\]
\end{enumerate}

In some sense, we have a strong comparison principle situation where touching occurs at infinity. If $v$ was $C^{1,1}$, this would imply that $v_{x,\bar\lambda(x)} \equiv v$ and so a calculus argument would then show that $v$ has the desired form (see \cite[Lemma 11.1]{LiZhang03}).

Since we have not established the strong comparison principle in $C^{0,1}$ regularity, we resort to a different argument, which was first observed in \cite{LiLi05} for $C^2$ solution and \cite{Li07-ARMA} for $C^{0,1}$ solutions. It turns out that, (i) and (ii) together with the super-harmonicity of $v$ imply directly that there exist $a, b > 0$ and $x_0 \in \RR^n$ such that
\[
u(x) = \Big(\frac{a}{1 + b^2|x - x_0|^2}\Big)^{\frac{n-2}{2}}.
\]
See \cite{Li07-ARMA, LiNgSymmetry}. This concludes the proof.
\end{proof}

\newcommand{\noopsort}[1]{}

\end{document}